\documentclass{article}
\usepackage{amsmath, amssymb, mathtools, amsthm, tikz, graphicx, multicol, biblatex, setspace}
\usetikzlibrary{fit}
\usetikzlibrary{decorations.pathmorphing}
\addbibresource{references.bib}
\usepackage[margin=1in]{geometry}
\usepackage{xcolor, soul, caption, capt-of}

\setlength{\parindent}{0pt}
\usepackage[shortlabels]{enumitem}
\setcounter{MaxMatrixCols}{20}

\newtheorem{theorem}{Theorem}[section]
\newtheorem{lemma}[theorem]{Lemma}

\newtheorem{corollary}[theorem]{Corollary}

\numberwithin{figure}{section}

\title{A characterization of all graphs cospectral to the double star $P_2(1,n)$}
\author{Emily Barranca \& Michael D. Barrus }
\date{August 2024}

\begin{document}


\begin{center}
{\Large A characterization of all graphs cospectral to the double star $P_2(1,n)$}
\vskip.20in
\begin{multicols}{2}
    Emily Barranca\\
    {\footnotesize Department of Mathematics and\\ Computer Science\\ St. Mary's College of Maryland\\ St. Mary's City, MD }
    \columnbreak 
    
    Michael D. Barrus\footnote{Posthumously} \\
    {\footnotesize Department of Mathematics and \\Applied Mathematical Sciences\\
University of Rhode Island\\ Kingston, RI  }
\end{multicols}

\end{center}

\vspace{0.2in}

\textbf{Abstract}
 We examine the adjacency spectrum of trees with diameter three, also referred to as double stars. 
 Using $P_2(a,b)$ to denote a double star with $ a$ and $b$ leaves at its respective endpoints, we discuss graphs which are cospectral to double stars for various parameters $a$ and $b$. In particular, we give constructions for graphs cospectral to $P_2(1,2k)$ for integers $k$.  Lastly, we show that the double star $P_2(1,n)$  is determined by its spectrum when $n$ is odd. That is, if a graph $G$ cospectral to $P_2(1,n)$ for odd $n$, then $G$ is isomorphic to $P_2(1,n)$.

\section{Introduction}\label{sec: intro}

Let $G = (V(G), E(G))$ be a graph. In this paper, all graphs discussed are simple and undirected. Recall that a subgraph $G' = (V(G'), E(G'))$ is \textit{induced} by the vertex set $V(G') \subseteq V(G)$ if for all pairs $u,v \in V(G')$ such that $\{u,v\} \in E(G)$, we have that $\{u,v\} \in E(G')$. We denote this subgraph $G[V(G')]$. To denote the subgraph $H'$ of graph $H$ induced by the set $V(H) \setminus \{v_1, v_2, \hdots v_n\}$, we write $H' = H \setminus  \{v_1, v_2, \hdots v_n\}$. \\

 Let $A(G)$ be the adjacency matrix of a graph $G$. Here the \textit{spectrum} of a graph refers to the adjacency spectrum, that is, the multiset of eigenvalues of the graph's adjacency matrix. Equivalently, the spectrum of $G$ is the set of roots of the \textit{characteristic polynomial} $\phi(G) = \det(A(G)-xI)$ together with their multiplicities. The adjacency spectrum determines some structural properties of a graph, including whether a graph is bipartite, the number of vertices and edges in a graph, and information on the diameter of a graph \cite{bh}. Suppose $G$ is a graph with two connected components $H$ and $K$. We denote the disjoint union of connected components as $G = H+K$, and note that $\phi(G)= \phi(H+K) = \phi(H) \phi(K)$ \cite{text2}.
 \\
 
 Two graphs are \textit{cospectral} when they have the same spectrum. Cospectral graphs must share structural properties which are given by the spectrum, but not every structural property of a graph is determined by the graph's spectrum. For example, consider Figure~\ref{fig: saltire}, depicting the graphs $K_{1,4}$ and $C_4 + K_1$. Both graphs have the spectrum $\{2^{(1)}, 0^{(3)}, -2^{(1)}\}$, where superscripts denote algebraic multiplicities of each eigenvalue. Since $K_{1,4}$ is connected and  $C_4 + K_1$ is not, the spectrum of a graph cannot determine the number of components in the graph.  Similarly, $K_{1,4}$ is a tree while the nontrivial component $C_4$ of the other graph is not, so the spectrum cannot determine whether a graph is a tree or forest. On the other hand, a graph is bipartite if and only if its spectrum is symmetric about zero. That is, a graph is bipartite if and only if $\lambda$ is an eigenvalue of $G$ implies $-\lambda$ is an eigenvalue of $G$ with the same multiplicity, so whether a graph is bipartite can be concluded from the spectrum \cite{alggt}.\\

A graph $G$ is said to be \textit{determined by its spectrum} (or \textit{DS}) if any graph $G'$ cospectral to $G$ is isomorphic to $G$. Some examples of graphs which are DS are empty graphs $E_n$, path graphs $P_n$, complete graphs $K_n$, and the disjoint union of complete components. Other examples include balanced complete bipartite graphs $K_{n,n}$ and the line graphs $L(K_n)$ and $L(K_{n,n})$ \cite{vandam}. \\

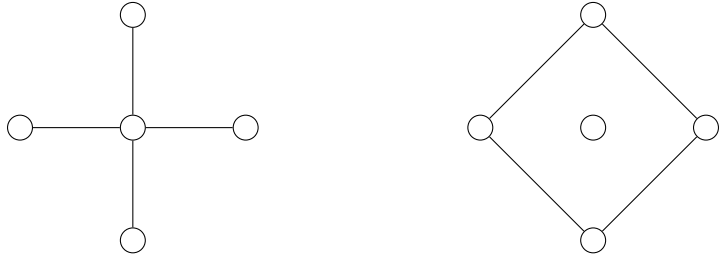
\begin{figure}[t]
    \centering
        \begin{tikzpicture}[main/.style = {draw, circle}] 
        \node[main] (0) at (0,0){};
        \node[main](1) at (0, 1.5){};
        \node[main](2) at (1.5,0) {};
        \node[main](3) at (0,-1.5){};
        \node[main](4) at (-1.5,0){};
        \draw (0)--(1);
        \draw (0)--(2);
        \draw (0)--(3);
        \draw (0)--(4);
    \end{tikzpicture}
    \hspace{1in} 
    \begin{tikzpicture}[main/.style = {draw, circle}] 
        \node[main] (0) at (0,0){};
        \node[main](1) at (0, 1.5){};
        \node[main](2) at (1.5,0) {};
        \node[main](3) at (0,-1.5){};
        \node[main](4) at (-1.5,0){};
        \draw (2)--(1);
        \draw (3)--(2);
        \draw (4)--(3);
        \draw (1)--(4);
    \end{tikzpicture}
    \caption{The graph $K_{1,4}$ pictured on the left is cospectral to $C_4+K_1$, pictured on the right.  }
    \label{fig: saltire}
\end{figure}
\begin{figure}[t]
    \centering
    \begin{tikzpicture}[main/.style={circle, draw, scale = 0.6}, scale = 1.2]
        \node[main] (a1) at (-1,1.25){$a_1$};
        \node[main] (a2) at (-1, 0.5){$a_2$};
        \node[circle] (adots) at (-1,-0.25){$\vdots$};
        \node[main] (am) at (-1,-1){$a_m$};
        \node[circle, draw] (p1) at (0,0){};
        \node[circle, draw] (p2) at (1,0){};
        \node[main] (b1) at (2,1.25){$b_1$};
        \node[main] (b2) at (2, 0.5){$b_2$};
        \node[circle] (bdots) at (2,-0.25){$\vdots$}; 
        \node[main] (bn) at (2,-1){$b_n$};
        \draw (a1) -- (p1);
        \draw (a2) -- (p1);
        \draw (am) -- (p1);
        \draw (p1) -- (p2);
        \draw (p2) -- (b1);
        \draw (p2) -- (b2);
        \draw (p2) -- (bn);
    \end{tikzpicture}
    \caption{The double star $P_2(m,n)$}
    \label{fig: double star p2ab}
\end{figure}
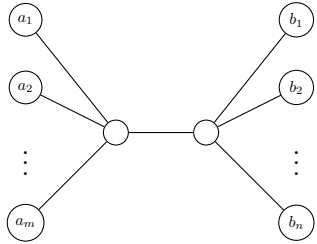

We use $P_2(a,b)$ to denote the \textit{double star} obtained by adding an edge between the center vertices of $K_{1,a}$ and $K_{1,b}$, as shown in in Figure~\ref{fig: double star p2ab}. We are interested in exploring the question: which double stars are DS? Schwenk showed in \cite{schwenk_trees}  that almost all trees are not DS. This result, however, relies on a subgraph with diameter 7 contained in almost all trees. Since double stars have diameter at most three, the question remains open for double stars.  \\

We start by considering the smallest cases of double stars. Note that $P_2(0,n)$ is isomorphic to the star graph $K_{1,n+1}$, which has spectrum $\{ \sqrt{n+1}^{(1)}, 0^{(n)}, -\sqrt{n+1}^{(1)}\}$. Then $P_2(0,n)$ is DS if and only if $n+1$ is prime. Let $G$ be a graph cospectral to $K_{1,n+1}$; then $G$ has only one positive eigenvalue and hence one nontrivial component. Furthermore $G$ is $P_4$-free and bipartite, and thus $G$ is a complete bipartite graph. If $n+1$ is composite, that is $n+1 = xy$, then the star $K_{1,n+1} = K_{1, xy}$ is cospectral with the graph $K_{x,y} + (x-1)(y-1)K_1$. Conversely, if $n+1$ is prime, there cannot exist a nonisomorphic complete bipartite graph with the spectrum $\{ \sqrt{n+1}^{(1)}, 0^{(n)}, -\sqrt{n+1}^{(1)}\}$. 
\\

Next, we shift our focus to slightly larger double stars. The double star $P_2(1,n)$ can also be classified as a starlike tree. A tree is said to be \textit{starlike} if exactly one vertex has degree greater than two. It was shown in \cite{starlike} that no pair of nonisomorphic starlike trees can be cospectral. This does not, however, explore cases where a starlike tree is cospectral to a graph that is not a starlike tree. 
\\

 A brute force search for cospectral graphs on graphs up to 13 vertices shows that $P_2(1,1)$, $P_2(1,2)$, $P_2(1,3)$, $P_2(1,5)$, $P_2(1,7)$, and $P_2(1,9)$ are DS double stars. Interestingly, the only other double star that is DS on 13 or fewer vertices is $P_2(5,5)$. We conjecture that very few double stars are DS. In contrast, it is widely believed that almost all graphs are DS. In Section~\ref{sec: pairs}, we review results on double stars which are not DS. In particular, we prove that $P_2(1,n)$ is not DS when $n$ is even. Section~\ref{sec: 1,a} then shows that if $n$ is odd, then $P_2(1,n)$ is DS.  \\

\section{Non-DS Double Stars} \label{sec: pairs}

In this section, we discuss several families of graphs that are cospectral to a double star $P_2(a,b)$ given various conditions on $a$ and $b$. For reference, we state the characteristic polynomial and spectrum of a double star here. \begin{theorem}\textup{\cite{double_star}}\label{double_star_spec}
\begin{enumerate}
    \item[\textup{(i)}]  The characteristic polynomial for the double star is 
       \begin{align*}
           \phi(P_2(a,b)) &= \phi(K_{1,a}) \phi(K_{1,b})-x^{a+b}\\
           &= x^{a+b+2} -(a+b+1)x^{a+b} +abx^{a+b-2}.
       \end{align*}
    \item[\textup{(ii)}] The spectrum of the double star $P_2(a,b)$ is $\{\lambda_1^{(1)}, \lambda_2^{(1)}, 0^{(a+b-2)}, - \lambda_2^{(1)},  -\lambda_1^{(1)}\}$
    where \begin{align*}
        \lambda_1 &= \sqrt{\frac{(a+b+1) + \sqrt{(a-b)^2+2(a+b)+1}}{2}};\\
        &\\
        \lambda_2 &= \sqrt{\frac{(a+b+1) - \sqrt{(a-b)^2+2(a+b)+1}}{2}}.\\
    \end{align*}
\end{enumerate}
\end{theorem}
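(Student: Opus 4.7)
The plan is to derive part (i) from the standard edge-deletion recursion for characteristic polynomials in a tree, then read part (ii) directly from the resulting factorization.

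For part (i), denote the two center vertices of $P_2(a,b)$ by $p_1, p_2$ and let $e = p_1p_2$ be the edge joining them. Since $P_2(a,b)$ is a tree, no cycle passes through $e$, and Schwenk's edge-deletion formula reduces to
\[
\phi(P_2(a,b)) = \phi(P_2(a,b) - e) - \phi(P_2(a,b) - p_1 - p_2).
\]
The graph $P_2(a,b) - e$ is the disjoint union $K_{1,a} + K_{1,b}$, so the first term equals $\phi(K_{1,a})\phi(K_{1,b})$, and $P_2(a,b) - p_1 - p_2$ is the empty graph on $a+b$ vertices, contributing $x^{a+b}$. This is precisely the first displayed equation. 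To obtain the closed form, I would substitute $\phi(K_{1,n}) = x^{n-1}(x^2 - n)$ (computed directly from the adjacency matrix of a star, or via the same edge-deletion identity), pull $x^{a+b-2}$ out of the product, and expand
\[
(x^2 - a)(x^2 - b) - x^2 = x^4 - (a+b+1)x^2 + ab,
\]
which yields the stated polynomial.

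For part (ii), the factorization
\[
\phi(P_2(a,b)) = x^{a+b-2}\bigl(x^4 - (a+b+1)x^2 + ab\bigr)
\]
shows at once that $0$ is an eigenvalue of multiplicity $a+b-2$. Setting $y = x^2$ and applying the quadratic formula to $y^2 - (a+b+1)y + ab$ gives
\[
y = \frac{(a+b+1) \pm \sqrt{(a+b+1)^2 - 4ab}}{2} = \frac{(a+b+1) \pm \sqrt{(a-b)^2 + 2(a+b) + 1}}{2},
\]
and the four remaining eigenvalues are $\pm\sqrt{y}$ for the two sign choices, matching $\pm\lambda_1$ and $\pm\lambda_2$ in the statement.

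There is essentially no serious obstacle here; the argument is an application of a standard identity followed by routine algebra. The one point worth verifying is that both values of $y$ are strictly positive, so that all four nonzero eigenvalues are real. This follows from the inequality $(a-b)^2 + 2(a+b) + 1 < (a+b+1)^2$ whenever $a,b \geq 1$, which forces the smaller root of the biquadratic to be positive. The resulting symmetry of the spectrum about $0$ is consistent with $P_2(a,b)$ being bipartite.
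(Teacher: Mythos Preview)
Your argument is correct and complete. The paper does not actually prove this theorem---it is quoted from the reference \cite{double_star} and stated without proof---so there is no in-paper argument to compare against; your use of the edge-deletion recursion is in fact the natural route, since the first displayed line of the statement is precisely what that recursion yields when applied to the central edge $p_1p_2$.
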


Additionally, to prove Theorem~\ref{thm: P_2(1,even)}, we will use the recursive construction for the characteristic polynomial of a graph given by Schwenk: 

\begin{theorem} \label{thm: schwenk_charpoly} \textup{\cite{schwenk_charpoly}}
    Let $v$ be a vertex in graph $G$ and let $\mathcal{C}(v)$ be the collection of cycles containing $v$. Then 
 $$\phi (G) = x\phi(G \setminus \{v\}) - \sum_{\{u,v\}\in E(G)} \phi(G \setminus \{v,u\}) - 2\sum_{Z \in \mathcal{C}(v)} \phi (G\setminus V(Z)). $$
 \end{theorem}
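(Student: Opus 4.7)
The plan is to prove Schwenk's formula by invoking the Sachs Coefficients Theorem and then partitioning the elementary subgraphs of $G$ according to the role of the distinguished vertex $v$. Recall that Sachs' theorem expresses
\[
\phi(G) = \sum_H (-1)^{p(H)} 2^{c(H)} x^{n - |V(H)|},
\]
where the sum is over all \emph{elementary} subgraphs $H$ of $G$ (spanning subgraphs in which every connected component is either a single edge or a cycle of length at least three), $p(H)$ denotes the number of components of $H$, and $c(H)$ denotes the number of cycle components. This identity follows from the Leibniz expansion of $\det(xI - A(G))$: the only permutations contributing nonzero terms have every nontrivial cycle supported on edges of $G$, and each such cycle in the permutation corresponds either to an edge of $G$ (a transposition) or to a longer cycle of $G$, with the factor of $2$ accounting for the two possible orientations of a longer cycle. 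I would first establish this identity as a preliminary lemma.

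With Sachs' theorem in hand, the next step is to partition the elementary subgraphs $H$ of $G$ into three classes according to the component containing $v$, if any. First, if $v$ is not a vertex of $H$, then $H$ is an elementary subgraph of $G \setminus \{v\}$ and the uncovered vertex $v$ contributes an additional factor of $x$, so this class totals $x\,\phi(G \setminus \{v\})$. Second, if $v$ lies in an edge component $\{u, v\}$ of $H$, then $H$ with this edge removed is an elementary subgraph of $G \setminus \{u, v\}$; including the edge adds $1$ to $p(H)$ and contributes nothing to $c(H)$, so this class totals $-\sum_{\{u,v\} \in E(G)} \phi(G \setminus \{u, v\})$. Third, if $v$ lies in a cycle component $Z$ of $H$, then $H$ with $Z$ removed is an elementary subgraph of $G \setminus V(Z)$; the cycle $Z$ adds $1$ to both $p(H)$ and $c(H)$, contributing a factor of $-2$, so this class totals $-2\sum_{Z \in \mathcal{C}(v)} \phi(G \setminus V(Z))$. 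Summing the three contributions yields the desired identity.

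The main technical hurdle is the sign and multiplicity bookkeeping in Sachs' theorem itself: one must correctly pair each permutation in the Leibniz expansion with an elementary subgraph (together with orientation data for its cycles) and verify that $\operatorname{sgn}(\sigma)$ combines with cycle-length parities to produce exactly $(-1)^{p(H)}$. Once this is granted, the partition argument is routine case analysis; the only subtleties are tracking the exponent $n - |V(H)|$ correctly in the first case (where $v$ contributes an extra $x$ that is absent in the factorization of $\phi(G \setminus \{v\})$) and being careful not to double-count cycles through $v$ when the collection $\mathcal{C}(v)$ is enumerated.
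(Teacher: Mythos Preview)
The paper does not actually prove this theorem; it is quoted from Schwenk's original paper and used as a black box. So there is no ``paper's own proof'' to compare against, and the relevant question is simply whether your argument is correct.

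It is. Deriving Schwenk's recursion from Sachs' Coefficients Theorem by partitioning the elementary subgraphs according to whether $v$ is absent, lies in a $K_2$ component, or lies in a cycle component is the standard route, and your bookkeeping of the $(-1)^{p(H)}2^{c(H)}$ weights and the power of $x$ in each case is accurate. One small wording slip: you call the elementary subgraphs ``spanning,'' but then (correctly) write the factor $x^{n-|V(H)|}$, which is the convention where $H$ ranges over not-necessarily-spanning subgraphs whose components are all edges or cycles; just drop the word ``spanning'' to make the statement internally consistent. With that fix, the argument goes through cleanly.
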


A full proof of the following theorem can be found in \cite{diss}. This proof builds graphs cospectral to $P_2(a,b)$ to show it is not DS. Many of these constructions contain complete bipartite graphs attached to other complete bipartite graphs at a shared vertex. We suspect that for any graph cospectral to a double star, this structural property will appear.

\begin{theorem}\label{thm:pairs}
For the following conditions on $a$ and $b$, $P_2(a,b)$ is not DS: 
    \begin{enumerate}
    \begin{multicols}{3}
    \item[\textup{(i)}] $a=2$ and $b$ is odd
    \item[\textup{(ii)}] $a=3$ and $b$ is even
    \item[\textup{(iii)}] $a=4$ and $b \ge 1$
    \item[\textup{(iv)}] $a=8$ and $b \ge 2$
    \item[\textup{(v)}] $a=9$ and $b \ge 2$
    \item[\textup{(vi)}] $a = 2n-1$ and $b = n$ 
    \item[\textup{(vii)}] $a=2r$ and $b=r-1$
    \item[\textup{(viii)}] $a = 2m$, $b = m+1$, and $m\ge 2$. 
    \end{multicols}
\end{enumerate}
\end{theorem}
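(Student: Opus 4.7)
The statement packages eight separate non-DS claims, so the plan is to prove each one constructively: for every case, exhibit an explicit graph $G' \not\cong P_2(a,b)$ with
\[
\phi(G') \;=\; \phi(P_2(a,b)) \;=\; x^{a+b+2} - (a+b+1)x^{a+b} + ab\,x^{a+b-2},
\]
where the right-hand side comes from Theorem~\ref{double_star_spec}(i). The characteristic polynomials of the candidates will be computed by repeated application of Schwenk's recursion (Theorem~\ref{thm: schwenk_charpoly}) at well-chosen cut vertices, so that $G' \setminus \{v\}$ and the $G' \setminus \{u,v\}$ terms split into disjoint unions of small pieces (stars, complete bipartite graphs, and isolated vertices) whose characteristic polynomials are already known. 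Non-isomorphism with $P_2(a,b)$ will be read off from a cheap invariant, typically the presence of a $4$-cycle or disconnectedness.

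The remark immediately preceding the theorem suggests the recurring template: amalgamate two complete bipartite graphs at a shared vertex (and pad with isolated vertices). The underlying cospectral move is the one already used in the introduction, $K_{1,xy} \sim K_{x,y} + (x-1)(y-1)K_1$, applied to a leaf-star of $P_2(a,b)$; the parity or divisibility hypothesis in each of (i)--(viii) should be exactly what makes the resulting isolated-vertex count and the central-edge structure reassemble into a single graph with the right spectrum. For instance, case (iii) with $a=4 = 2\cdot 2$ invites replacing the $K_{1,4}$-side by a $C_4$ glued at one of its vertices to the rest, and case (vii) with $a=2r$, $b=r-1$ looks tuned so that a $K_{2,r}$ block absorbed at the central edge produces the required coefficient $ab = 2r(r-1)$. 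For each case I would therefore: (a) write down a candidate $G'$ built from such an amalgamation, (b) apply Schwenk at the amalgamating vertex, (c) collect terms and verify the polynomial identity against the target above, and (d) point to the $4$-cycle (or an over-large vertex degree) that rules out $G' \cong P_2(a,b)$.

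The main obstacle is step (a): guessing the right amalgamation for each parameter family, especially the three parametric cases (vi)--(viii). Step (c) is a routine but lengthy polynomial manipulation once the construction is in hand; it can be organized by induction on the parameter ($n$, $r$, or $m$), using Schwenk's recursion to express $\phi(G'_k)$ in terms of $\phi(G'_{k-1})$ and shorter pieces and matching against the analogous recursion satisfied by $\phi(P_2(a,b))$ read off from Theorem~\ref{double_star_spec}(i). To locate the construction in the first place, I would compute the smallest nontrivial instance of each family by hand or by a short computer search over graphs on $a+b+2$ vertices, identify an amalgamation pattern, and then lift that pattern to general parameters. The individual cases (iv) and (v), with fixed $a \in \{8,9\}$ and $b \ge 2$ arbitrary, are the most suspicious --- the absence of clean divisibility structure suggests that the construction there uses a less obvious cospectral move, which will likely be the hardest piece of the argument and the place where I would expect to have to consult the construction given in \cite{diss}.
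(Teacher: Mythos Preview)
Your proposal is correct and matches the approach the paper describes: the paper does not actually prove this theorem in-text but defers to \cite{diss}, noting only that the proof there is constructive and builds cospectral mates by attaching complete bipartite graphs to other complete bipartite graphs at a shared vertex---exactly the template you outline. Your honest caveat that cases (iv) and (v) may require consulting \cite{diss} for the right construction is therefore appropriate, since that reference is the paper's own source for all eight constructions.
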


In the remainder of this section, we will prove the following theorem. 
\begin{theorem}\label{thm: P_2(1,even)}
Let $k >1$. The double star $P_2(1,2k)$ is not DS. Furthermore, if $k$ is even, then there are at least two nonisomorphic graphs cospectral to $P_2(1,2k)$.
\end{theorem}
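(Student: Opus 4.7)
My plan is to produce explicit cospectral mates of $P_2(1,2k)$ inside the family ``$K_{s,t}$ with two edges incident to a common vertex removed, together with some isolated vertices'', and to verify cospectrality via the bipartite identity $\phi(G) = x^{|W|-|U|}\det(x^2 I_{|U|} - BB^T)$, which holds for any bipartite graph $G$ with parts $U, W$ (with $|U| \le |W|$) and biadjacency matrix $B$; this identity is standard and can also be derived by applying Theorem \ref{thm: schwenk_charpoly} inductively to the leaves.

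For the first assertion I would take $G_k$ to be $K_{2,k+2}$ with two edges incident to a single vertex $v$ in the size-$2$ part removed, together with $k-1$ isolated vertices. The biadjacency matrix of the non-isolated component has one row of $k+2$ ones and one row of exactly $k$ ones, so $BB^T = \begin{pmatrix} k+2 & k \\ k & k \end{pmatrix}$ and $\det(x^2 I_2 - BB^T) = x^4 - (2k+2)x^2 + 2k$. Multiplying by $x^k$ from the bipartite identity and by $x^{k-1}$ from the isolated vertices yields $\phi(G_k) = x^{2k-1}(x^4 - (2k+2)x^2 + 2k) = \phi(P_2(1,2k))$ by Theorem \ref{double_star_spec}. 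Since $G_k$ is disconnected for $k > 1$ but $P_2(1,2k)$ is connected, $G_k \not\cong P_2(1,2k)$, proving that $P_2(1,2k)$ is not DS.

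For the furthermore clause, when $k$ is even I would apply the same recipe with parameters $s = (k+2)/2$ and $t = 4$, producing the graph $H_k$ obtained from $K_{(k+2)/2,\,4}$ by removing two edges incident to a single vertex in the size-$(k+2)/2$ part and adding $(3k-4)/2$ isolated vertices. A parallel computation shows that $BB^T$ now has rank two with the same nonzero eigenvalues $(k+1) \pm \sqrt{k^2+1}$, giving $\phi(H_k) = \phi(P_2(1,2k))$. To separate $H_k$ from $G_k$ when $k \ge 4$, I would compare maximum degrees: the vertex $u$ of $G_k$ has degree $k+2$, while every vertex of $H_k$ has degree at most $\max\{4,(k+2)/2\}$, which is strictly less than $k+2$ for $k \ge 4$.

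The main obstacle is motivating why these two parameter choices arise. Matching the trace and product of the nonzero eigenvalues of $BB^T$ for the ``$K_{s,t}$ minus two edges'' construction against the coefficients prescribed by Theorem \ref{double_star_spec} yields the system $st = 2k+4$ and $(s-1)(t-2) = k$, which reduces to $2s^2 - (k+6)s + 2(k+2) = 0$ with roots $s = 2$ and $s = (k+2)/2$. The second root is an integer precisely when $k$ is even, which is exactly when this family provides a second distinct cospectral mate.
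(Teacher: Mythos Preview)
Your proof is correct and, despite the different packaging, produces exactly the same two cospectral mates as the paper. Your ``$K_{2,k+2}$ with two edges at one vertex removed'' is isomorphic to the paper's graph $A_k$ (obtained from $K_{2,k}$ by attaching two pendants at one vertex of the small side), and your ``$K_{(k+2)/2,4}$ with two edges at one vertex removed'' is isomorphic to the paper's $B_{k/2}$ (obtained from $K_{4,k/2}$ by adding a new vertex adjacent to two vertices of the size-$4$ side). The real difference is in the computation: the paper evaluates $\phi$ by repeatedly applying Schwenk's deletion formula (Theorem~\ref{thm: schwenk_charpoly}), whereas you use the bipartite identity $\phi(G)=x^{|W|-|U|}\det(x^2 I-BB^{T})$ and read off the trace and determinant of the relevant $2\times 2$ block. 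Your route is shorter and, more importantly, it \emph{motivates} the constructions: solving $st=2k+4$, $(s-1)(t-2)=k$ for integer $s$ explains precisely why the second family appears only when $k$ is even. The paper simply presents $A_a$ and $B_a$ without this derivation.

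One shared wrinkle: for $k=2$ your $G_2$ and $H_2$ coincide (and likewise the paper's $A_2$ and $B_1$ are isomorphic), so neither argument actually exhibits two \emph{distinct} cospectral mates of $P_2(1,4)$. You correctly flagged this by restricting your max-degree comparison to $k\ge 4$; the paper does not address it at all and indeed later states that $P_2(1,4)$ has only one nonisomorphic cospectral mate. So this is a looseness in the theorem statement rather than a defect in your argument.
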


\begin{proof}
Consider the double star $P_2(1,2a)$ for $a \ge 2$, which has characteristic polynomial \\$\phi(P_2(1,2a)) =x^{2a+3}-(2a+2)x^{2a+1} +2ax^{2a-1} $ by Theorem~\ref{double_star_spec}. \\

Let $A_a$ be the graph obtained from $K_{2,a}$ by adding two pendent vertices adjacent to a vertex in the partite set of order two, with labels given in Figure~\ref{fig: Aa Ba}. By Theorem~\ref{thm: schwenk_charpoly}, consider successively removing vertices $u_1$ and $u_2$. Then 
    \begin{align*}
        \phi(A_a)&=x\phi(A_a\setminus \{u_1\}) - \phi(A_a\setminus \{u_1, v_2\})\\
        &=x(x\phi(A_a\setminus \{u_1, u_2\})- \phi(A_a\setminus \{u_1, u_2,v_2\}) - \phi(A_a\setminus \{u_1,v_2\})\\
        &= x(x\phi(K_{2,a})- \phi(K_{1,a})) - \phi(K_1+K_{1,a})\\
        &= x(x^{a+1}(x^2-2a) -(x^{a+1} - ax^{a-1}))-x(x^{a+1} - ax^{a-1})\\
        &= x^{a+4}-(2a+2)x^{a+2}+2ax^a.
    \end{align*}
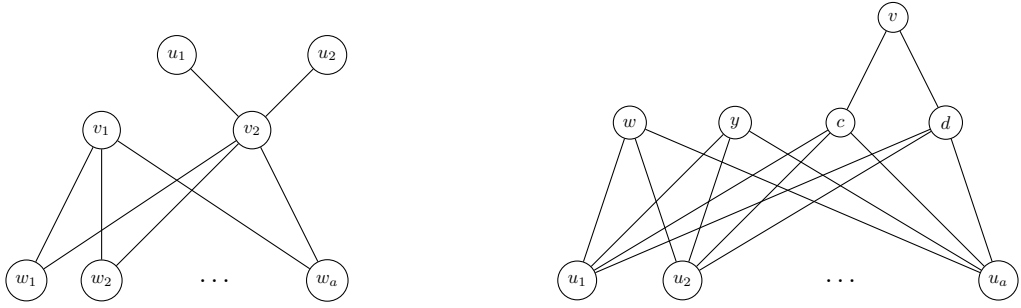
\begin{figure}[t]
    \centering
    \begin{tikzpicture}[main/.style={circle, draw, scale = 0.7}]
             \node[main] (a1) at (-1,2){$v_1$};
             \node[main] (a2) at (1,2){$v_2$};
             \node[main] (b1) at (0,3){$u_1$};
             \node[main] (b2) at (2,3){$u_2$};
             \node[main] (c1) at (-2,0){$w_1$};
             \node[main] (c2) at (-1, 0){$w_2$};
             \node[circle] (dots) at (0.5, 0){$\hdots$}; 
             \node[main] (ca) at (2,0){$w_a$};
             \draw (a1) -- (c1);
             \draw (a1) -- (c2); 
             \draw (a1) -- (ca);
             \draw (a2) -- (c1);
             \draw (a2) -- (c2); 
             \draw (a2) -- (ca);
             \draw (a2) -- (b1);
             \draw (a2) -- (b2); 
         \end{tikzpicture}
         \hspace{1in}
         \begin{tikzpicture}[main/.style = {circle, draw, scale = 0.7}, scale = 0.7]
        \node[main] (v) at (5,3){$v$};
        \node[main] (a) at (0,1){$w$};
        \node[main] (b) at (2,1){$y$}; 
        \node[main] (c) at (4,1){$c$};
        \node[main] (d) at (6,1){$d$};
        \node[main] (1) at (-1, -2){$u_1$};
        \node[main] (2) at (1,-2){$u_2$};
        \node[circle] (dots) at (4,-2){$\hdots$};
        \node[main] (m) at (7,-2){$u_a$};
        \draw (v) -- (c);
        \draw (v) -- (d);
        \draw (a) -- (1);
        \draw (a) -- (2);
        \draw (a) -- (m); 
        \draw (b) -- (1);
        \draw (b) -- (2);
        \draw (b) -- (m); 
        \draw (c) -- (1);
        \draw (c) -- (2);
        \draw (c) -- (m); 
        \draw (d) -- (1);
        \draw (d) -- (2);
        \draw (d) -- (m); 
    \end{tikzpicture}
    \caption{The graphs $A_a$ and $B_a$.}
    \label{fig: Aa Ba}
\end{figure}
        Thus for $a\ge2$, the double star $P_2(1, 2a)$ is cospectral to the graph $A_a + (a-1)K_1$.\\

We give another construction for a nonisomorphic graph cospectral to the double star $P_2(1,4a)$ for integers $a \ge 1$. 
By Theorem~\ref{double_star_spec}, the double star $P_2(1,4a)$ has characteristic polynomial \\$\phi(P_2(1,4a)) = x^{4a+3} - (4a+2)x^{4a+1} +4ax^{4a-1}$.\\

Let $B_a$ be the graph obtained by taking a copy of the complete bipartite graph $K_{4,a}$ and adding a vertex adjacent to two of the vertices in the partite set with order four, with labels given in Figure~\ref{fig: Aa Ba}.\\

 We will use Theorem~\ref{thm: schwenk_charpoly} to find $\phi(B_a)$, removing the vertex $v$. Thus we have:
    \begin{align*}
        \phi(G) &= x\phi(G \setminus \{v\}) - 2\phi(G\setminus \{v,c\}) - 2a\phi(G\setminus\{v,c,u_1,d\}) -2 \cdot 2a(a-1)\phi(G\setminus \{v,d,u_1,w,u_2, c\})\\
        &- 2 \cdot 2a(a-1)(a-2)\phi(G\{v,d,u_1,w,u_2,y,u_3,c\})\\
        &= x\phi(K_{4,a})-2\phi(K_{3,a}) - 2a\phi(K_{2,a-1}) - 4a(a-1)\phi(K_{1,a-2}) -4a(a-1)(a-2)\phi((a-3)K_1)\\
        &= x(x^{a+4} - 4ax^{a+2}) - 2(x^{a+3} - 3ax^{a+1})-2a(x^{a+1} -2(a-1)x^{a-1}) - 4a(a-1)(x^{a-1} - (a-2)x^{a-3})\\
        &- 4a(a-1)(a-2)x^{a-3}\\
        &= x^{a+5} -(4a+2)x^{a+3} +(4a)x^{a+1}.
    \end{align*}

We conclude that the graph $P_2(1,4a)$ is cospectral to the graph $B_a + (3a-2)K_1$.\\
\end{proof}
 We will show in the next section that these are the only graphs cospectral to $P_2(1,n)$. That is, if $n=4$ or if $n \equiv 2 \mod 4$ and $n \ne 2$, then there exists exactly one nonisomorphic graph cospectral to $P_2(1,n)$. If $n \equiv 0 \mod 4$ and $n$ is not equal to 0 or 4, then there exist exactly two nonisomorphic graphs cospectral to $P_2(1,n)$. The double star $P_2(1,n)$ is DS otherwise. 


\section{The double stars $P_2(1,n)$ for odd $n$} \label{sec: 1,a}

We continue our discussion of double stars by showing that for odd $n$, the double star $P_2(1,n)$ is DS. \\

As stated in Section~\ref{sec: intro}, we conjecture that very few other double stars are DS. The results in Section~\ref{sec: pairs} show that many types of double stars are not DS; a computational search for graphs cospectral to double stars yields many more examples.
In contrast, we noted earlier that each of $P_2(1,1)$, $P_2(1,3)$, $P_2(1,5)$, $P_2(1,7)$, and $P_2(1,9)$ is DS.  \\

We will reference the following theorems: 
 \begin{theorem} \label{sachs} \textup{\cite{text2}} 
     For a graph $M$, let $c(M)$ denote the number of cycles in $M$, and let $k(M)$ be the number of components of $M$. Let $G$ be a graph, and let $\mathcal{G}_n$ be the set of $n$-vertex subgraphs of $G$ in which each component is a cycle or is isomorphic to $K_2$. 
     Then the characteristic polynomial $\phi(G)$ of $G$ is given by 
     $$\phi(G) = \sum_{n=0}^{|V(G)|} a_n x^{|V(G)|-n},$$
     where 
     $$a_n = \sum_{H \in \mathcal{G}_n} (-1)^{k(H)}2^{c(H)}.$$
\end{theorem}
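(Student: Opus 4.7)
The plan is to prove Sachs' coefficient formula through the standard combinatorial route: expand each coefficient of $\phi(G)$ as a sum of principal minors of $A(G)$, then expand those minors via the Leibniz formula and reorganize the surviving permutations according to the spanning Sachs subgraphs they induce. Using the monic convention $\phi(G) = \det(xI - A(G))$ (under which $a_0 = 1$ arises from the empty subgraph in $\mathcal{G}_0$), a standard identity in linear algebra states that the coefficient of $x^{|V(G)|-n}$ equals $(-1)^n$ times the sum over $S \subseteq V(G)$ with $|S| = n$ of the principal minor $\det A[S]$. The first reduction is therefore to prove, for each such $S$,
\[
\det A[S] = \sum_{H} (-1)^{n - k(H)} 2^{c(H)},
\]
where $H$ ranges over spanning subgraphs of $G[S]$ whose components are edges or cycles. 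Summing over $S$ then collapses the double sum over $(S, H)$ pairs into a single sum over $\mathcal{G}_n$, and the remaining $(-1)^n$ combines with $(-1)^{n - k(H)}$ to produce the claimed $(-1)^{k(H)}$.

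The core of the argument is the Leibniz expansion $\det A[S] = \sum_{\sigma}\operatorname{sgn}(\sigma) \prod_i A_{i, \sigma(i)}$, taken over permutations $\sigma$ of $S$. A term is nonzero exactly when $\{i, \sigma(i)\} \in E(G)$ for every $i$, which forces $\sigma$ to be fixed-point free and forces each cycle of $\sigma$ to trace either a transposition on an edge of $G[S]$ or an $\ell$-cycle on a cycle of $G[S]$ with $\ell \ge 3$. I would then define a map $\sigma \mapsto H_\sigma$ sending each contributing permutation to the spanning subgraph recording those edges and cycles. The fibre over a fixed Sachs subgraph $H$ has cardinality $2^{c(H)}$: each cycle of $H$ of length at least three admits exactly two cyclic permutations, while each $K_2$ component is realized by the unique transposition of its endpoints.

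The sign computation is the crux. For a permutation whose cycle lengths are $\ell_1, \dots, \ell_m$ with $\sum_j \ell_j = n$ and $m = k(H_\sigma)$, we have $\operatorname{sgn}(\sigma) = \prod_j (-1)^{\ell_j - 1} = (-1)^{n - k(H_\sigma)}$. Every permutation in the fibre over $H$ therefore contributes the same sign and the same product $\prod_i A_{i, \sigma(i)} = 1$, so the fibre contributes $(-1)^{n - k(H)} 2^{c(H)}$ to $\det A[S]$. Assembling the fibres and chaining through the principal-minor identity yields the formula in the theorem.

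The main obstacle is the bookkeeping in this sign/orientation step: one must distinguish the 2-cycle case (where the transposition equals its own inverse, giving one permutation per edge rather than two) from the $\ell$-cycle case with $\ell \ge 3$, and then verify that after summing over $S$ and combining with the overall $(-1)^n$ from the principal-minor identity, the exponent of $-1$ collapses cleanly to $k(H)$. Once this accounting is done correctly, the remainder of the proof is purely organizational.
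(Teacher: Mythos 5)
Your proposal is correct: the reduction to principal minors, the Leibniz expansion, the fibre count of $2^{c(H)}$ (two orientations per cycle of length at least three, one transposition per $K_2$ component), and the sign computation $\operatorname{sgn}(\sigma) = (-1)^{n-k(H)}$ combining with the $(-1)^n$ from the minor identity to give $(-1)^{k(H)}$ are all accurate, and this is the standard proof of Sachs' coefficient theorem. There is nothing to compare against here, since the paper states this result as a known theorem cited from the literature and offers no proof of its own. One point in your favor worth noting: the paper's introduction defines $\phi(G) = \det(A(G) - xI)$, which is off from the theorem's implicitly monic normalization (the empty subgraph gives $a_0 = 1$) by a factor of $(-1)^{|V(G)|}$; your explicit adoption of the convention $\phi(G) = \det(xI - A(G))$ resolves an inconsistency the paper itself leaves silent.
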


\begin{theorem}\label{thm: interlacing} \textup{\cite{bh}}
    Let $A$ be a graph on $n$ vertices and $B$ an induced subgraph of $A$ on $m$ vertices. Suppose the eigenvalues of $A$ are given by $\lambda_1 \ge \hdots \ge \lambda_n$ and the eigenvalues of $B$ are given by $\theta_1 \ge \hdots \ge \theta_m$. Then the eigenvalues of $B$ \textit{interlace} the eigenvalues of $A$. That is 
    $$\lambda_i \ge \theta_i \ge \lambda_{n-m+i} \text{ for } i\in \{1,\hdots,m\}.$$
\end{theorem}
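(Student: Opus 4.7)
The plan is to derive the interlacing inequalities from the Courant--Fischer min-max characterization of eigenvalues of a real symmetric matrix, exploiting the fact that since $B$ is an induced subgraph of $A$, its adjacency matrix $A(B)$ is, after relabeling vertices so that those of $B$ come first, a principal $m \times m$ submatrix of $A(A)$. That is, we may write
\[
A(A) \;=\; \begin{pmatrix} A(B) & C \\ C^T & D \end{pmatrix}
\]
for some blocks $C$ and $D$. Because both matrices are real symmetric, the Courant--Fischer theorem applies: for a real symmetric $N \times N$ matrix $M$ with eigenvalues $\mu_1 \geq \cdots \geq \mu_N$,
\[
\mu_i \;=\; \max_{\dim S = i}\, \min_{0 \neq x \in S}\, \frac{x^T M x}{x^T x} \;=\; \min_{\dim T = N-i+1}\, \max_{0 \neq x \in T}\, \frac{x^T M x}{x^T x}.
\]

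First, I would prove the upper bound $\lambda_i \geq \theta_i$. Choose an $i$-dimensional subspace $S \subseteq \R^m$ attaining the maximum in the first Courant--Fischer formula for $\theta_i$, so that $\min_{0 \neq x \in S} x^T A(B) x / x^T x = \theta_i$. Padding each vector of $S$ with zeros in the coordinates indexed by $V(A) \setminus V(B)$ produces a subspace $\widetilde{S} \subseteq \R^n$ of the same dimension $i$. The padded coordinates annihilate the off-diagonal blocks $C, C^T$ and the block $D$, so $\tilde x^T A(A) \tilde x = x^T A(B) x$ and $\tilde x^T \tilde x = x^T x$ for every such vector. Using $\widetilde{S}$ as a candidate in the max-over-min formula for $\lambda_i$ then yields $\lambda_i \geq \theta_i$.

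Next, the lower bound $\theta_i \geq \lambda_{n-m+i}$ follows by the dual argument. Pick an $(m-i+1)$-dimensional subspace $T \subseteq \R^m$ attaining the minimum in the second Courant--Fischer formula for $\theta_i$, so $\max_{0 \neq x \in T} x^T A(B) x / x^T x = \theta_i$. Padding with zeros again produces $\widetilde{T} \subseteq \R^n$ of dimension $m - i + 1 = n - (n - m + i) + 1$, preserving Rayleigh quotients as before. Using $\widetilde{T}$ in the min-over-max formula for $\lambda_{n-m+i}$ gives $\lambda_{n-m+i} \leq \theta_i$, completing the inequality chain.

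The main obstacle is conceptual rather than computational: one needs the Courant--Fischer theorem (itself provable via an eigenbasis plus a dimension-counting argument showing that any $i$-dimensional subspace meets the span of the top $N-i+1$ eigenvectors nontrivially) and the key observation that padding vectors with zeros on the vertices outside $B$ preserves the Rayleigh quotient precisely because $A(B)$ sits as a principal submatrix of $A(A)$. Beyond matching the dimensions $i$ and $n - m + i$ correctly in the two applications, there is nothing delicate to verify.
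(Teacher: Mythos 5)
This is Cauchy eigenvalue interlacing, which the paper does not prove at all: it is imported verbatim from the cited reference \cite{bh}, so there is no internal proof to compare against. Your argument is correct and complete: the key observation that $A(B)$ is a principal submatrix of $A(A)$ precisely because the subgraph is \emph{induced} is stated explicitly, the zero-padding preserves Rayleigh quotients exactly as you claim (the padded coordinates kill the blocks $C$, $C^T$, $D$), and the dimension count $m-i+1 = n-(n-m+i)+1$ in the dual application is matched correctly, which is the one place such proofs typically go wrong. Your route via Courant--Fischer is essentially the standard textbook proof and is, in substance, the same as the one in the cited source: Brouwer and Haemers prove a more general interlacing statement for matrices of the form $S^T M S$ with $S^T S = I$, and your zero-padding map is exactly such an isometry $S$ specialized to the principal-submatrix case, so nothing is lost and nothing genuinely new is needed. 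The only (acknowledged and acceptable) black box is Courant--Fischer itself, whose proof sketch you indicate correctly.
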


In this section, suppose that $G$ is a graph having the same spectrum as $P_2(1,n)$, where by Theorem~\ref{double_star_spec}, $\phi(G) = \phi(P_2(1,n)) = x^{n+3} - (n+2)x^{n+1} +nx^{n-1}$. We show that the eigenvalues force special structure on $G$.

\begin{lemma}\label{thm: lambda2 less 1}
    The graph $G$ has second largest eigenvalue strictly less than 1.
\end{lemma}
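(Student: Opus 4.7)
The plan is to prove this lemma by direct computation from the explicit eigenvalue formula in Theorem~\ref{double_star_spec}(ii). Because $G$ and $P_2(1,n)$ are cospectral, the eigenvalues of $G$ are precisely those of $P_2(1,n)$, so the second largest eigenvalue of $G$ equals $\lambda_2(P_2(1,n))$. Substituting $a=1$ and $b=n$ into the formula, the discriminant simplifies as $(1-n)^2 + 2(1+n) + 1 = n^2 + 4$, and therefore
\[
\lambda_2(G)^2 \;=\; \frac{(n+2) - \sqrt{n^2+4}}{2}.
\]

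It then suffices to verify that this quantity is strictly less than $1$. Rearranging, $\lambda_2(G)^2 < 1$ is equivalent to $n < \sqrt{n^2+4}$, which is immediate by squaring both sides: $n^2 < n^2 + 4$. This completes the argument in a single line, with no real obstacle to overcome. The result is elementary but useful: as a bound on $\lambda_2(G)$ coming purely from the spectrum, it can be combined with Theorem~\ref{thm: interlacing} in subsequent lemmas to rule out various induced subgraphs of $G$ whose second largest eigenvalue is at least $1$, thereby constraining the structure of any graph cospectral to $P_2(1,n)$.
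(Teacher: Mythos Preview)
Your proof is correct and follows essentially the same approach as the paper: substitute $a=1$, $b=n$ into the explicit formula from Theorem~\ref{double_star_spec}(ii) and verify that $\lambda_2 < 1$. Your direct algebraic verification (reducing to $n < \sqrt{n^2+4}$) is slightly cleaner than the paper's appeal to calculus (showing the expression is increasing in $n$ and bounded above by $1$), and incidentally your computation of $\lambda_2^2 = \tfrac{(n+2)-\sqrt{n^2+4}}{2}$ corrects a minor typo in the paper's displayed formula.
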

\begin{proof}
    Recall from Theorem~\ref{double_star_spec} that the second largest eigenvalue $\lambda_2$ of $P_2(1,n$) is given by 
    $$\lambda_2 = \sqrt{\frac{n+1 - \sqrt{n^2+4}}{2}}. $$
   
    Standard techniques from calculus show that this function is increasing in $n$ and bounded above by 1. 
\end{proof}

\begin{corollary} \label{cor: forbidden subgraphs}
    A graph $G$ cospectral to $P_2(1,n)$ has no induced subgraph isomorphic to $2K_2$, $R$,  or $P_2(2,2)$, where $R$ is the graph given in Figure~\ref{fig: C4(0,0,1,1)}. 
\end{corollary}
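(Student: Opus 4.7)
The plan is to deduce all three forbidden-subgraph statements from a single interlacing argument, using Lemma~\ref{thm: lambda2 less 1} as the driver. Specifically, by Theorem~\ref{thm: interlacing}, if $H$ is an induced subgraph of $G$ on $m$ vertices with eigenvalues $\theta_1 \ge \cdots \ge \theta_m$, then $\theta_2 \le \lambda_2(G)$. Since Lemma~\ref{thm: lambda2 less 1} gives $\lambda_2(G) < 1$, it suffices to show that each of $2K_2$, $R$, and $P_2(2,2)$ has second-largest eigenvalue at least $1$; any such graph then cannot embed as an induced subgraph of $G$.

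First I would dispatch $2K_2$ and $P_2(2,2)$, which are essentially instant. The spectrum of $K_2$ is $\{1,-1\}$, so the spectrum of $2K_2$ is $\{1,1,-1,-1\}$, giving $\theta_2 = 1$. For $P_2(2,2)$, I would apply Theorem~\ref{double_star_spec}(ii) with $a = b = 2$: then $(a-b)^2 + 2(a+b) + 1 = 9$, so
\[
\theta_2 = \sqrt{\frac{(a+b+1) - \sqrt{(a-b)^2 + 2(a+b)+1}}{2}} = \sqrt{\frac{5-3}{2}} = 1.
\]
Both cases contradict $\theta_2 \le \lambda_2(G) < 1$, ruling out these induced subgraphs.

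For $R$ (the graph $C_4$ with a pendant attached to each of two adjacent cycle vertices, from Figure~\ref{fig: C4(0,0,1,1)}), I would compute the characteristic polynomial directly via Theorem~\ref{thm: schwenk_charpoly}, deleting one of the pendant vertices and peeling the graph down to a path and smaller induced pieces whose characteristic polynomials are standard. Since $R$ has only six vertices, this is a routine computation; the key fact I need is just that the resulting polynomial has a root in $[1,\infty)$ aside from the Perron root. Equivalently, because $R$ contains $2K_2$ as an induced subgraph (the two pendant edges form a matching), interlacing applied \emph{to $R$} yields $\theta_2(R) \ge \theta_2(2K_2) = 1$, which already suffices. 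I would prefer this second, shorter route, since it avoids spelling out the full characteristic polynomial of $R$.

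The main (minor) obstacle is that the middle case $R$ is the one whose structure is least transparent, and I want to make the argument robust regardless of orientation conventions used in the figure. Using the nested interlacing trick above — $2K_2 \subseteq R$ implies $\theta_2(R) \ge 1$ — sidesteps any direct eigenvalue computation for $R$. With all three subgraphs excluded, the corollary follows.
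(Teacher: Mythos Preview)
Your treatment of $2K_2$ and $P_2(2,2)$ is correct and matches the paper's approach: both have $\theta_2 = 1$, so interlacing against Lemma~\ref{thm: lambda2 less 1} rules them out.

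The argument for $R$, however, breaks down. The two pendant edges of $R$ do \emph{not} induce a $2K_2$: in Figure~\ref{fig: C4(0,0,1,1)} the pendants are attached to two \emph{adjacent} vertices of the $C_4$, so the four vertices carrying the pendant edges induce a $P_4$, not $2K_2$. In fact a short case check shows that $R$ is $2K_2$-free, so no nested-interlacing trick of this kind is available. More damagingly, the eigenvalues of $R$ (listed in the caption of Figure~\ref{fig: C4(0,0,1,1)}) give $\theta_2(R)\approx 0.802<1$, so the inequality $\theta_2(R)\ge 1$ you are aiming for is simply false; the $\lambda_2$-based route cannot exclude $R$.

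The fix --- and this is what the paper does --- is to use a different spectral feature of $G$: since $G$ is cospectral to $P_2(1,n)$, it has exactly two positive eigenvalues, i.e.\ $\lambda_3(G)=0$. The graph $R$ has \emph{three} positive eigenvalues ($\theta_3(R)\approx 0.555>0$), and interlacing gives $\lambda_3(G)\ge \theta_3(R)$, a contradiction. So the corollary really needs both pieces of spectral information about $G$ (second eigenvalue below $1$ \emph{and} only two positive eigenvalues), not just the first.
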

\begin{figure}[t]
    \centering
    \begin{tikzpicture}[main/.style= {circle, draw, scale =1.2}, scale =1.5]
        \node[main] (a) at (0,0){};
        \node[main] (b) at (1,0){};
        \node[main] (c) at (1,1){};
        \node[main] (d) at (0,1){};
        \node[main] (e) at (0,2){};
        \node[main] (f) at (1,2){};
        \draw (a) -- (b);
        \draw (b) -- (c);
        \draw (c) -- (d); 
        \draw (a) -- (d);
        \draw (c) -- (f);
        \draw (e) -- (d);
    \end{tikzpicture}
    \caption{The graph $R$ has approximate eigenvalues $\{2.247, 0.802, 0.555, -0.555, -0.802, -2.247\}$.}
    \label{fig: C4(0,0,1,1)}
\end{figure}
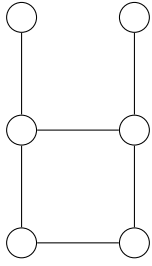
\begin{proof}
One can calculate the spectra of the graphs listed in Corollary~\ref{cor: forbidden subgraphs} to see that each graph either has its second largest eigenvalue at least 1 or has more than two positive eigenvalues, and so by Theorem~\ref{thm: interlacing} these graphs cannot be induced subgraphs to any graph cospectral to $P_2(1,n)$.
\end{proof}

 \begin{lemma} \label{2K2, R free}
     If $H$ is a connected bipartite graph that is $\{2K_2, R\}$-free, then $H$ is $\{P_4+K_1\}$-free. 
 \end{lemma}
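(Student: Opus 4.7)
I will argue by contradiction. Suppose $H$ is connected, bipartite, $\{2K_2,R\}$-free, and yet contains an induced $P_4+K_1$. Label the $P_4$ as $a\text{-}b\text{-}c\text{-}d$ and the isolated vertex as $e$. Since the $P_4$ forces a bipartition with $\{a,c\}$ and $\{b,d\}$ in opposite color classes, I will exploit this throughout. Because $H$ is connected and $e$ has no neighbor in $\{a,b,c,d\}$, the vertex $e$ must have a neighbor $f\in V(H)\setminus\{a,b,c,d,e\}$. The strategy is to use the $2K_2$-freeness to force $f$ to attach nontrivially to the path $abcd$, and then to enumerate the possible attachment patterns and derive either an induced $2K_2$ or an induced $R$ in each case.

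First I would show $f$ has at least one neighbor in $\{a,b,c,d\}$: otherwise $\{e,f,a,b\}$ induces two disjoint edges $ef$ and $ab$, giving $2K_2$. Next, since $H$ is bipartite and $f$ is adjacent to $e$, the vertex $f$ lies in one color class, so its nonempty neighborhood $N\subseteq\{a,b,c,d\}$ is contained in either $\{a,c\}$ or $\{b,d\}$. This gives six cases: $N\in\{\{a\},\{c\},\{a,c\},\{b\},\{d\},\{b,d\}\}$.

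For each singleton $N$ I expect to pick a four-vertex subset of $\{a,b,c,d,e,f\}$ inducing $2K_2$: for $N=\{a\}$, take $\{f,a,c,d\}$ with edges $fa$ and $cd$; for $N=\{c\}$, take $\{e,f,a,b\}$ with edges $ef$ and $ab$; and symmetrically for $N=\{b\}$ and $N=\{d\}$. For the two two-element cases $N=\{a,c\}$ and $N=\{b,d\}$, the whole six-vertex set $\{a,b,c,d,e,f\}$ should induce exactly $R$: for $N=\{a,c\}$, the vertices $\{a,b,c,f\}$ form a $C_4$ via edges $ab,bc,cf,fa$, with pendants $d$ at $c$ and $e$ at $f$ attached to the two adjacent cycle vertices $c,f$; the case $N=\{b,d\}$ is analogous with $C_4$ on $\{b,c,d,f\}$. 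A quick check of bipartiteness confirms no additional edges appear in these induced subgraphs.

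The main obstacle is simply to be sure the case split is exhaustive and that the asserted six-vertex induced subgraph really matches $R$ rather than some other $C_4$-plus-pendants graph. This amounts to verifying that the two pendants land on \emph{adjacent} cycle vertices, which is automatic from the construction since $f$ is joined to both $c$ (or $b$) and to $e$, and since bipartiteness rules out any extra chords. Once this is verified in one case, the other follows by the $abcd\leftrightarrow dcba$ symmetry of the $P_4$.
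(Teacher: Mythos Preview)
Your proof is correct and follows essentially the same contradiction strategy as the paper: assume an induced $P_4+K_1$, locate an auxiliary vertex attached both to the isolated vertex's side and to the $P_4$, and then rule out each attachment pattern by exhibiting an induced $2K_2$ or $R$. The one difference is in the choice of auxiliary vertex. The paper takes a shortest path from the isolated vertex $w$ to the $P_4$ and works with the penultimate vertex $t$ of that path (together with its predecessor toward $w$ when building $R$). You instead take an arbitrary neighbor $f$ of the isolated vertex $e$ and use $2K_2$-freeness directly (via $\{e,f,a,b\}$) to force $f$ to meet the $P_4$. Your choice keeps the whole argument inside the fixed six-vertex set $\{a,b,c,d,e,f\}$ and avoids the distance reasoning, so the case analysis is a bit tidier; the paper's version needs one more vertex on the path in the $R$ case. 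Your verification that in the $N=\{a,c\}$ case the two pendants $d$ and $e$ land on adjacent $C_4$-vertices $c$ and $f$ is exactly the point that matches the definition of $R$, and bipartiteness indeed guarantees no extra edges, so the identification is sound.
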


 \begin{proof}
    Let $H$ be a connected bipartite graph that has no induced subgraph isomorphic to $2K_2$ or $R$. Assume towards contradiction that $H$ has an induced subgraph isomorphic to $P_4+K_1$, and label the vertices along the subgraph isomorphic to $P_4$ as $z_1z_2z_3z_4$. Then, there exists a vertex $w$ such that $w$ is not adjacent to any of the vertices $z_1$, $z_2$, $z_3$, or $z_4$. Since $H$ is connected, there exists a shortest path from $w$ to some vertex in the path $z_1z_2z_3z_4$. Let $t$ be the  vertex in this path that is adjacent to a $z_i$ for $i \in \{1,2,3,4\}$. By symmetry, it suffices to examine the cases where $t$ is adjacent to vertex $z_1$ or to vertex $z_2$.\\
    
    First, assume that $t$ is adjacent to $z_1$. Then $t$ cannot be adjacent to $z_2$ or $z_4$, since this would construct an odd cycle in $H$, which is a contradiction. If $t$ is adjacent to $z_1$ but not $z_3$, then dist($w,z_4$)$\ge 4$ and so $H$ has path $P_n$ with $n \ge 5$, within which there is an induced subgraph isomorphic to $2K_2$, a contradiction. If $t$ is adjacent to both $z_1$ and $z_3$, then $H$ has $R$ as an induced subgraph, as shown in Figure~\ref{fig: K22-1,1 contradiction}, which is again a contradiction.\\
      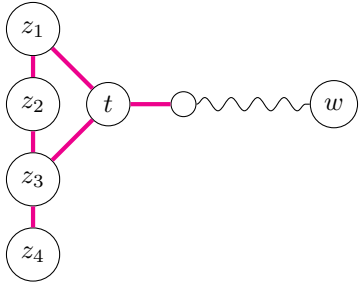
\begin{figure}[t]
    \centering
    \begin{tikzpicture}[main/.style={circle, draw}]
        \node[main](u) at (0,3){$z_1$};
        \node[main](x) at (0,2){$z_2$};
        \node[main](v) at (0,1){$z_3$};
        \node[main](y) at (0,0){$z_4$};
        \node[main](w) at (4,2){$w$};
        \node[main](term) at (1, 2){$t$}; 
        \node[main] (t2) at (2,2){};
        \draw[magenta, ultra thick ] (u) -- (x);
        \draw[magenta, ultra thick ] (x) -- (v);
        \draw[magenta, ultra thick ] (v) -- (y);
        \draw[magenta, ultra thick] (term) -- (u); 
        \draw[magenta, ultra thick ] (term) -- (v);
        \draw[magenta, ultra thick] (term) -- (t2); 
        \draw[decorate,decoration = snake] (t2) -- (w);
    \end{tikzpicture}
    \caption{The graph $R$ present as an induced subgraph of the graph $H$, a contradiction.}
    \label{fig: K22-1,1 contradiction}
\end{figure}

    Next, assume that $t$ is adjacent to $z_2$. Then $t$ cannot be adjacent to $z_1$ or $z_3$, since this would construct an odd cycle in $H$. By symmetry, the preceding paragraph also shows that we can assume $t$ is not adjacent to $z_4$. Then dist$(w,z_4) \ge 4$ and there is an induced path $P_n$ with $n \ge 5$, a contradiction. Thus we have shown that every nonisolated vertex $w$ in $H$ must be adjacent to at least one of $z_1$, $z_2$, $z_3$, or $z_4$. We  conclude that $H$ is $\{P_4 + K_1\} $-free. 
 \end{proof}

\begin{theorem} \label{thm: structure}
    Let $n\ge 3$, and let $G$ be a graph cospectral to $P_2(1,n)$. Then the nonisolated vertices in $V(G)$ can be partitioned into 4 nonempty vertex sets $A,B,C,D$ such that the induced subgraphs $G[A \cup B]$, $G[B \cup C]$, and $G[C \cup D]$ are all complete bipartite graphs, as shown in Figure~\ref{fig: structure}. 

\end{theorem}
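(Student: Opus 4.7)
The plan is to combine the forbidden-induced-subgraph conditions from Corollary~\ref{cor: forbidden subgraphs} and Lemma~\ref{2K2, R free} with the observation that a $2K_2$-free bipartite graph is a chain graph, meaning the vertex neighborhoods on each side of the bipartition are totally ordered by inclusion. I would then use the $P_4 + K_1$-freeness to show that on each side exactly two distinct neighborhoods occur, which produces the desired four-part partition.

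First I would collect preliminary properties of $G$. Since $\phi(G) = x^{n+3} - (n+2)x^{n+1} + n x^{n-1}$ is, up to sign, an even or odd polynomial, the spectrum of $G$ is symmetric about $0$, so $G$ is bipartite. Corollary~\ref{cor: forbidden subgraphs} rules out induced $2K_2$, which would arise from selecting one edge in each of two distinct nontrivial components; thus $G$ has exactly one nontrivial component, call it $H$. Because $\phi(P_2(1,n))$ has four nonzero roots while every complete bipartite graph has only two nonzero eigenvalues, $H$ is not complete bipartite. By Lemma~\ref{2K2, R free}, $H$ is also $\{P_4 + K_1\}$-free. The $2K_2$-freeness yields the chain structure on each side of $H$'s bipartition $X \cup Y$: if $x, x'$ lie in the same side, then $N(x)$ and $N(x')$ must be comparable under inclusion, since otherwise $y \in N(x) \setminus N(x')$ and $y' \in N(x') \setminus N(x)$ would yield an induced $2K_2$ on $xy$ and $x'y'$. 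Call the distinct neighborhoods on each side the \emph{levels} of that side; because $H$ is connected and not complete bipartite, each side has at least two levels.

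The heart of the proof is showing each side has exactly two levels. Suppose toward contradiction that $X$ has three distinct levels witnessed by $x_1, x_2, x_3 \in X$ with $N(x_1) \subsetneq N(x_2) \subsetneq N(x_3)$. Pick $y_1 \in N(x_1)$, $y_2 \in N(x_2) \setminus N(x_1)$, and $y_3 \in N(x_3) \setminus N(x_2)$. A direct check shows the induced subgraph on $\{x_1, y_1, x_2, y_2, y_3\}$ has edges exactly $x_1 y_1$, $x_2 y_1$, and $x_2 y_2$, with $y_3$ isolated; in other words, it is a $P_4 + K_1$, contradicting Lemma~\ref{2K2, R free}. Hence each side has exactly two levels. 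Writing $X = X_1 \cup X_2$ with $N(X_1) \subsetneq N(X_2)$ and $Y = Y_1 \cup Y_2$ with $N(Y_1) \subsetneq N(Y_2)$, the absence of isolated vertices in $H$ forces $N(X_2) = Y$ and $N(Y_2) = X$; then since $N(X_1)$ must be a proper subset of $Y$ that still contains every vertex of $Y_2$ (as $N(Y_2) = X \supseteq X_1$), we conclude $N(X_1) = Y_2$ and symmetrically $N(Y_1) = X_2$. Setting $A = X_1$, $B = Y_2$, $C = X_2$, and $D = Y_1$ then produces four nonempty sets for which $G[A \cup B]$, $G[B \cup C]$, and $G[C \cup D]$ are all complete bipartite. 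I expect the main obstacle to be the three-levels-implies-$P_4 + K_1$ step, which is the central forcing argument of the theorem; the bookkeeping to convert the two-level chain structure into the four-part partition is routine.
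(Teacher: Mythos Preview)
Your argument is correct and constitutes a genuinely different route to the same conclusion. The paper proceeds concretely: it first argues that the nontrivial component $G'$ has diameter exactly~$3$, fixes an explicit induced path $P=uxvy$, and then classifies every remaining vertex by its set of neighbours on $P$; the $P_4+K_1$-freeness from Lemma~\ref{2K2, R free} is then applied locally, using a vertex of $P$ outside each block pair as the isolated witness, to force $G[A\cup B]$, $G[B\cup C]$, $G[C\cup D]$ to be $P_4$-free and hence complete bipartite. You instead exploit the global fact that a $2K_2$-free bipartite graph is a chain graph, so that the neighbourhoods on each side are linearly ordered, and then use $P_4+K_1$-freeness once to cap the number of distinct neighbourhood classes on each side at two; the four-block structure then drops out of the two-by-two level decomposition. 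Your approach is cleaner in that it yields the canonical partition directly, independent of any chosen $P_4$, and makes the link to chain/difference graphs explicit; the paper's approach is more self-contained for a reader unfamiliar with that characterisation, and the explicit path $uxvy$ it fixes is reused verbatim in the later arguments of Section~\ref{sec: 1,a}.
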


\begin{proof}
    Let $G$ be a graph cospectral to $P_2(1,n)$ for $n\ge 3$. By Corollary~\ref{cor: forbidden subgraphs}, $G$ is $2K_2$-free and thus has one nontrivial component. Let $G'$ be the nontrivial component of $G$.
    \\
    
    We will show that the diameter of $G'$ is exactly 3. By Corollary~\ref{cor: forbidden subgraphs}, it follows that $G'$ is $P_5$-free, so $G$ has diameter less than or equal to $3$.
    Since $P_2(1,n)$ is bipartite, and $G$ is cospectral to $P_2(1,n)$, the spectrum of $G$ is symmetric about zero, and $G$ is bipartite. 
    Since $G'$ has two positive eigenvalues, it cannot be isomorphic to a complete bipartite graph, and so 
    has diameter 3 \cite{BABEL}. \\
    
    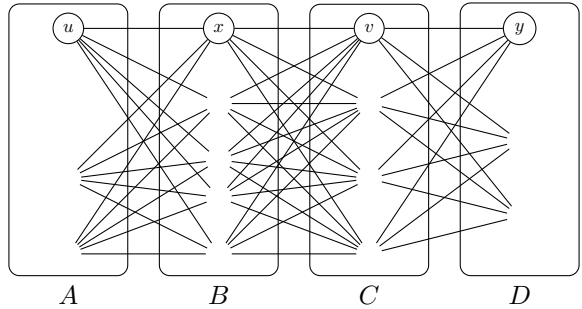
\begin{figure}[t]
    \centering
    \begin{tikzpicture}[main/.style= {draw, circle, scale = 0.7}, scale = 1]
        \node[main](u) at (-4,0){$u$}; 
        \node[main] (x) at (-2,0){$x$};
        \node[main] (v) at (0,0){$v$};
        \node[main] (y) at (2,0){$y$}; 
        \node[circle] (u1) at (-4.5, -1){}; 
        \node[circle] (u2) at (-4,-2 ){};
        \node[circle] (u3) at (-3.5,-1.5){};
        \node[circle] (u4) at (-4, -3){};
        \node [fit=(u1)(u2)(u3)(u4)(u),draw, rectangle, rounded corners, label = {below: $A$}] {};
        \node[circle] (x1) at (-2.5, -1){};
        \node[circle] (x2) at (-1.5, -1){};
        \node[circle] (x3) at (-2, -1){};
        \node[circle] (x4) at (-2, -1.75){};
        \node[circle] (x5) at (-2, -2.25){};
        \node[circle] (x6) at (-2,-3){};
        \node [fit=(x1)(x2)(x3)(x4)(x5)(x6)(x),draw, rectangle, rounded corners, label = {below: $B$}] {};
        \node[circle] (v1) at (-0.5,-1){};
        \node[circle] (v2) at (0.5, -1){};
        \node[circle] (v3) at (0,-1){};
        \node[circle] (v4) at (0,-2){}; 
        \node[circle] (v5) at (0,-3){};
        \node [fit=(v1)(v2)(v3)(v4)(v5)(v),draw, rectangle, rounded corners, label = {below: $C$}] {};
        \node[circle] (y1) at (1.5, -1){};
        \node[circle] (y2) at (2.5, -3){};
        \node[circle] (y3) at (2,-1.5){};
        \node[circle] (y4) at (2, -2.5){};
        \node [fit=(y1)(y2)(y3)(y4)(y),draw, rectangle, rounded corners, label = {below: $D$}] {};
        \draw (u) -- (x);
        \draw (x) -- (v);
        \draw (v) -- (y); 
        \draw (u) -- (x3);
        \draw (u) -- (x4);
        \draw (u) -- (x5);
        \draw (u) -- (x6);
        \draw (u2) -- (x3);
        \draw (u2) -- (x4);
        \draw (u2) -- (x5);
        \draw (u2) -- (x6);
        \draw (u4) -- (x3);
        \draw (u4) -- (x4);
        \draw (u4) -- (x5);
        \draw (u4) -- (x6);
        \draw (u2) -- (x);
        \draw (u4) -- (x);
        \draw (v3) -- (x3);
        \draw (v3) -- (x4);
        \draw (v3) -- (x5);
        \draw (v3) -- (x6);
        \draw (v4) -- (x3);
        \draw (v4) -- (x4);
        \draw (v4) -- (x5);
        \draw (v4) -- (x6);
        \draw (v5) -- (x3);
        \draw (v5) -- (x4);
        \draw (v5) -- (x5);
        \draw (v5) -- (x6);
        \draw (v) -- (x3);
        \draw (v) -- (x4);
        \draw (v) -- (x5);
        \draw (v) -- (x6);
        \draw (v3) -- (x);
        \draw (v4) -- (x);
        \draw (v5) -- (x);
        \draw (y3) -- (v3);
        \draw (y3) -- (v4); 
        \draw (y3) -- (v5); 
        \draw (y4) -- (v3);
        \draw (y4) -- (v4); 
        \draw (y4) -- (v5);
        \draw (y) -- (v3);
        \draw (y) -- (v4); 
        \draw (y) -- (v5);
        \draw (v) -- (y3); 
        \draw (v) -- (y4);
    \end{tikzpicture}
    \caption{General structure of a the nontrivial component $G'$ of a graph $G$ cospectral to $P_2(1,n)$. The induced subgraphs $G'[A \cup B]$, $G'[B \cup C]$, and $G'[C \cup D]$ are all complete bipartite graphs. }
    \label{fig: structure}
\end{figure}
    Hence there exists an induced subgraph isomorphic to $P_4$ in $G$. Let $P = uxvy$ be such a path. 
    We show that $G'$ has the form given in Figure~\ref{fig: structure}. \\

    By Lemma~\ref{2K2, R free}, every vertex $w$ in $G'$ has at least one neighbor on the path $P$. Note that since $G$ is $2K_2$-free, the neighborhood of $w$ cannot contain only $u$ or only $y$ from $\{u,x,v,y\}$. 
    Let $A$ be the set of vertices in $G'$ whose only neighbor in $P$ is $x$. Let $B$ be the set of vertices in $G'$ adjacent to both $u$ and $v$ on $P$. Let $C$ be the set of vertices in $G'$ such that each vertex in $C$ is adjacent to both $x$ and $y$ on $P$. And let $D$ be the set of vertices in $G'$ adjacent to only $v$ on $P$. 
    \\

    We show that each of the sets $A\cup B$, $B\cup C$, and $C\cup D$ induces a complete bipartite graph. We will omit the $C\cup D$ case, which is similar to $A \cup B$. Note that each vertex in $A \cup B$ is not adjacent to $y$. Then Lemma~\ref{2K2, R free} implies $G[A\cup B]$ is $P_4$-free. Similarly, $G[B\cup C]$ must be $P_4$-free, since $R$ is not an induced subgraph of $G[B\cup C \cup \{u\}]$.
    Since $G[A\cup B]$ and $G[B\cup C]$ are both $P_4$-free, $G[A\cup B]$ and $G[B\cup C]$ must both be complete bipartite graphs. We conclude that $G'$ must have the structure in Figure~\ref{fig: structure}. 
    \end{proof}
   The following lemmas give structural requirements for $G'$. 

    \begin{lemma} \label{lem: sachs term 4 thing}
     If $G$ is a bipartite graph, then the third nonzero term of its characteristic polynomial is the number of subgraphs in $G$ isomorphic to $2K_2$ whose vertices do not induce a graph isomorphic to $C_4$. 
 \end{lemma}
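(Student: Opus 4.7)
The plan is to read off the third nonzero coefficient of $\phi(G)$ using Sachs' theorem (Theorem~\ref{sachs}) and then reinterpret it combinatorially. Since $G$ is bipartite, $G$ contains no odd cycles, so the family $\mathcal{G}_n$ of $n$-vertex subgraphs of $G$ whose components are cycles or $K_2$ is restricted: cycles, when they appear, must be even. First I would record the low-order coefficients: $a_0 = 1$, $a_1 = 0$ (no single-vertex subgraph is $K_2$ or a cycle), $a_2 = -|E(G)|$ (each edge is the unique element of $\mathcal{G}_2$, contributing $(-1)^1 2^0 = -1$), and $a_3 = 0$ (any 3-vertex disjoint union of $K_2$'s and cycles would have to be $C_3$, which is forbidden). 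So, assuming $G$ has at least one edge, the third nonzero coefficient is $a_4$.

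Next I would evaluate $a_4$ by enumerating $\mathcal{G}_4$. In a bipartite graph, the only 4-vertex disjoint unions of $K_2$'s and cycles are $2K_2$ and $C_4$. Applying Sachs,
\begin{equation*}
a_4 \;=\; N_{2K_2}\cdot (-1)^2 2^0 \;+\; N_{C_4}\cdot (-1)^1 2^1 \;=\; N_{2K_2} - 2 N_{C_4},
\end{equation*}
where $N_{2K_2}$ counts subgraphs of $G$ isomorphic to $2K_2$ (pairs of disjoint edges, not required to be induced) and $N_{C_4}$ counts subgraphs isomorphic to $C_4$.

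Finally I would partition the $2K_2$ subgraphs by whether their four vertices induce a $C_4$ in $G$. The key combinatorial observation is that a 4-vertex set induces $C_4$ if and only if it hosts exactly two perfect matchings, and each of these matchings is a distinct $2K_2$ subgraph. Hence the number of $2K_2$ subgraphs whose vertex set induces $C_4$ equals $2 N_{C_4}$, since every induced $C_4$ contains exactly two copies of $2K_2$ and every $2K_2$-subgraph whose vertices induce $C_4$ arises this way. Subtracting, $a_4 = N_{2K_2} - 2 N_{C_4}$ is the number of $2K_2$ subgraphs of $G$ whose four vertices do not induce $C_4$, as claimed.

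The only nontrivial step is the counting of $2K_2$'s inside a $C_4$; the rest is straightforward bookkeeping from Sachs' formula together with bipartiteness to eliminate odd cycles. If $G$ has no edges the statement is vacuous, and the parity of the exponents in $\phi(G)$ (inherited from the symmetry of the spectrum of a bipartite graph) guarantees there are no intervening nonzero odd-indexed coefficients to worry about.
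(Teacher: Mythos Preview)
Your proof is correct and follows essentially the same approach as the paper's: both apply Sachs' theorem to compute $a_4 = N_{2K_2} - 2N_{C_4}$, then pair off the two $2K_2$ subgraphs inside each $C_4$ against that $C_4$'s contribution of $-2$. You are slightly more careful than the paper in explicitly checking $a_0, a_1, a_2, a_3$ to justify that $a_4$ really is the third nonzero coefficient, which the paper simply asserts.
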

 \begin{proof}
     Let $G$ be a bipartite graph. We refer to a subgraph isomorphic to $2K_2$ as a 2-matching of $G$. 
     By Theorem~\ref{sachs}, the third nonzero coefficient in the characteristic polynomial is equivalent to $\begin{aligned} a_4 = \sum_{H \in \mathcal{G}_4} (-1)^{k(H)}(2)^{c(H)},\end{aligned}$ where $\mathcal{G}_4$ is the set of $4$-vertex subgraphs of $G$ in which each component is a cycle or is isomorphic to $K_2$, $k(H)$ denotes the number of components in the graph $H$, and $c(H)$ denotes the number of components which are cycles in $H$. Thus there are two types of graphs in $\mathcal{G}_4$: Graphs which are isomorphic to $C_4$ and graphs which are isomorphic to $2K_2$. Each copy of $C_4$ contributes $(-1)^1(2)^1 =-2$ to the sum, while every copy of $2K_2$ contributes $(-1)^2(2)^0 = 1$ to the sum $a_4$. \\

     There are two types of 2-matchings in $\mathcal{G}_4$: those which induce a subgraph isomorphic to $C_4$ and those which do not. Observe that for each distinct $C_4$ in $\mathcal{G}_4$, there are exactly two subgraphs of the form $2K_2$ in $\mathcal{G}_4$ on the same set of 4 vertices. That is, each set of 4 vertices in $G$ which induce a graph isomorphic to $C_4$, contributes $1+1-2=0$ to the sum $a_4$. Each set of 4 vertices which induce a graph isomorphic to $2K_2$ or $P_4$ contributes 1 to the sum $a_4$. Hence $a_4$ is equal to the total number of 2-matchings in $G$ which do not induce a subgraph of $G$ isomorphic to $C_4$. 
 \end{proof}

 Using the sets $A$, $B$, $C$, and $D$ as defined in the proof of Theorem~\ref{thm: structure},
 let $a= |A|$, $b= |B| $,  $c = |C|$; and let $d = |D| $. 

 \begin{lemma} \label{lem: char poly complete minus edge}
     Let $a,b,c,d \ge 1$. The graph $G'$ has characteristic polynomial $$\phi(G') = x^{a+b+c+d} - (ab + bc + cd)x^{a+b+c+d-2} +(abcd)x^{a+b+c+d-4}.$$
 \end{lemma}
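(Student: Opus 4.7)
My plan is to combine an elementary rank argument on the adjacency matrix with the Sachs-type counting lemma already established in the paper. First, observe that inside each of the four sets $A$, $B$, $C$, $D$, all vertices are \emph{twins} in $G'$: any two vertices in $A$ share neighborhood $B$, any two in $B$ share neighborhood $A \cup C$, any two in $C$ share neighborhood $B \cup D$, and any two in $D$ share neighborhood $C$. Hence the rows of the adjacency matrix $A(G')$ span a space of dimension at most $4$, so the rank of $A(G')$ is at most $4$. Equivalently, writing $N = a + b + c + d$, the eigenvalue $0$ has multiplicity at least $N - 4$, and therefore $x^{N-4}$ divides $\phi(G')$.

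Because $G'$ is bipartite (with bipartition $A \cup C$ versus $B \cup D$), its spectrum is symmetric about $0$, and so every odd-indexed coefficient $a_n$ vanishes in the expansion $\phi(G') = \sum_n a_n x^{N-n}$. Combined with the divisibility from the previous paragraph, this forces
\begin{equation*}
\phi(G') \;=\; x^N + p\,x^{N-2} + r\,x^{N-4}
\end{equation*}
for constants $p$ and $r$ yet to be determined.

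To evaluate $p$, I apply Theorem~\ref{sachs}: the only $2$-vertex element of $\mathcal{G}_2$ is $K_2$, so $p = a_2 = -|E(G')| = -(ab + bc + cd)$. To evaluate $r$, I invoke Lemma~\ref{lem: sachs term 4 thing}, according to which $r$ counts the $2$-matchings in $G'$ whose four endpoints do not induce a $C_4$. I enumerate these $2$-matchings by sorting their two edges into the three ``layers'' $AB$, $BC$, $CD$. For a pair of edges in the same layer, or in two consecutive layers ($AB$ with $BC$, or $BC$ with $CD$), the four endpoints lie entirely in two adjacent parts that are joined completely inside $G'$, so they induce $K_{2,2} = C_4$ and contribute nothing. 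The only remaining case is an $AB$-edge $\{a_1,b_1\}$ together with a $CD$-edge $\{c_1,d_1\}$; since $A$ sends no edge to $C$ or $D$ and $B$ sends no edge to $D$, the induced subgraph on $\{a_1,b_1,c_1,d_1\}$ is the path $a_1 b_1 c_1 d_1$, not a $C_4$. There are exactly $abcd$ such choices, so $r = abcd$, and the claimed formula follows.

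The main obstacle is the case check for $2$-matchings, in particular confirming that every same-layer or adjacent-layer pair genuinely induces a $C_4$ (which uses completeness of $G'[A \cup B]$, $G'[B \cup C]$, and $G'[C \cup D]$) while the $AB$-with-$CD$ pairs never do. Everything else reduces to straightforward linear-algebraic bookkeeping and coefficient tracking.
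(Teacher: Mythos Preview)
Your proof is correct and takes a genuinely different route from the paper's. The paper computes the leading three coefficients exactly as you do (order, edge count, and the $2$-matching count from Lemma~\ref{lem: sachs term 4 thing}), but to rule out any further nonzero terms it invokes the ambient hypothesis that $G$ is cospectral with $P_2(1,n)$: since $P_2(1,n)$ has only two positive eigenvalues, so does $G'$, and this forces the polynomial to terminate at degree $N-4$. Your rank-$4$ argument via twins is cleaner and more self-contained: it establishes the characteristic polynomial of \emph{any} graph with this four-block complete-bipartite structure, independent of any cospectrality assumption, so the lemma becomes a standalone fact rather than one tied to the surrounding theorem. One small expository point: in the adjacent-layer case (an $AB$-edge together with a $BC$-edge) the four endpoints actually occupy \emph{three} parts $A$, $B$, $C$, not two, so the phrase ``lie entirely in two adjacent parts'' is not quite accurate there; the induced $C_4$ still appears, but verifying it uses completeness of both $G'[A\cup B]$ and $G'[B\cup C]$ simultaneously, exactly as you acknowledge in your closing paragraph.
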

 \begin{proof}
 First, observe that $G'$ has order $a+b+c+d$ and $ab+bc+cd$ edges, giving the first two nonzero terms of the characteristic polynomial by Theorem~\ref{sachs}. Since $G'$ is bipartite, there is no nonzero term with power 
 $a+b+c+d-1$ or $a+b+c+d-3$, which correspond to  forbidden subgraphs in all bipartite graphs. By Lemma~\ref{lem: sachs term 4 thing}, to calculate the coefficient of $x^{a+b+c+d-4}$, we count the subgraphs corresponding to 2-matchings whose vertices do not induce a graph isomorphic to $C_4$. 
 Let $v_a$ and $v_d$ be arbitrary vertices in $A$ and $D$ respectively. Observe that $v_a$ has degree $b$ and $v_d$ has degree $c$. Further observe that every 2-matching which does not include both a vertex in $A$ and a vertex in $D$ must induce a graph isomorphic to $C_4$, while no 2-matchings which include both $v_a$ and $v_d$ can induce a subgraph isomorphic to $C_4$ since $v_a$ and $v_d$ are not adjacent in $G'$. Thus the coefficient of $x^{a+b+c+d-4}$ will be the number of 2-matchings such that one edge must include a vertex from $A$ and the other must include a vertex from $D$. Since these vertices are nonadjacent and in different partite sets, we may choose any edge incident to $v_a$ and any edge incident to $v_d$. Considering all vertices in $A$ and $D$, there are $abcd$ such matchings. 
 \\

 Now $G$ is cospectral to $P_2(1,n)$ by assumption, so $G$ has exactly two nonzero positive eigenvalues, implying that there are no other nonzero terms in the characteristic polynomial $\phi(G')$. Hence we conclude $$\phi(G') = x^{a+b+c+d} - (ab +bc + cd)x^{a+b+c+d-2} + abcdx^{a+b+c+d-4}.$$ 
 \end{proof}

    The following theorem gives additional structural restrictions on $G'$.
   
\begin{theorem} \label{thm: restructure}
    Let $m \ge 2$. In the graph $G'$, one of the following holds:
    \begin{enumerate}[(i)]
        \item $c=1$ and $d=2$. That is, $G'$ can be constructed by identifying one vertex in the partite set of order 2 in a graph isomorphic to $K_{2,m}$ with the vertex in the partite set of order 1 in $K_{1,2}$.
        \item $b=2$ and $d=2$. That is, $G'$ is constructed from a complete bipartite graph $K_{4,m}$ with an additional vertex adjacent to two of the vertices in the partite set of order 4.
    \end{enumerate}
\end{theorem}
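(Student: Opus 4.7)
The plan is to turn the structural picture from Theorem~\ref{thm: structure} into a pair of Diophantine equations in $(a,b,c,d)$, classify the positive integer solutions, and identify each surviving solution with the graph it describes.

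First, because $G$ differs from $G'$ by some $k \ge 0$ isolated vertices, we have $\phi(G) = x^{k}\phi(G')$. Comparing Lemma~\ref{lem: char poly complete minus edge} with $\phi(P_2(1,n)) = x^{n+3}-(n+2)x^{n+1}+nx^{n-1}$ given by Theorem~\ref{double_star_spec}, matching nonzero coefficients yields
\[
    ab + bc + cd \;=\; n + 2 \qquad\text{and}\qquad abcd \;=\; n,
\]
which combine into the single identity $ab + bc + cd - abcd = 2$, or equivalently $b(a+c) = cd(ab - 1) + 2$. At this point I would also invoke the symmetry hidden in Theorem~\ref{thm: structure}: reversing the induced path $P = uxvy$ swaps $(A,B) \leftrightarrow (D,C)$, so $(a,b,c,d)$ and $(d,c,b,a)$ describe the same graph, and I may assume $a \le d$.

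Next I case-analyze on $ab$. If $ab = 1$ then $a = b = 1$, and the identity forces $c = 1$, $d = n$, so $G' \cong P_2(1,n)$; this trivial case lies outside the statement of the theorem. If $ab \ge 2$, a short bound argument applied to $b(a+c) = cd(ab-1)+2$ rules out $a \ge 2$ under the normalization $a \le d$, so we may assume $a = 1$. The identity then reduces to $b + c\bigl(b - (b-1)d\bigr) = 2$. For $b = 2$ this simplifies to $c(d-2) = 0$, forcing $d = 2$ with any $c \ge 1$: the subcase $c = 1$ gives $(1,2,1,2)$, which is conclusion (i) with $m = 2$, while $c \ge 2$ gives conclusion (ii) with $m = c$. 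For $b \ge 3$ the identity rearranges as $c\bigl((b-1)d - b\bigr) = b-2$, and since $(b-1)d - b > b - 2$ whenever $d \ge 3$, we must have $d = 2$, which then forces $c = 1$, yielding conclusion (i) with $m = b$.

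Finally, I would verify that each surviving solution is realized by exactly the graph in the statement. Applied to $(1, m, 1, 2)$, the complete-bipartite structure of Theorem~\ref{thm: structure} reconstructs $K_{2,m}$ with two pendants attached at one vertex of the partite set of order $2$, which is the identification described in (i). Applied to $(1, 2, m, 2)$ it reconstructs $K_{4,m}$ with one additional vertex adjacent to two vertices in the partite set of order $4$, as in (ii). I expect the main delicacy to lie in the arithmetic case analysis itself: one must justify cleanly that the symmetry reduction $a \le d$ removes every $a \ge 2$ case, and then enumerate $b = 2$ versus $b \ge 3$ carefully to avoid missing any integer solution.
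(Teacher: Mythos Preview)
Your argument is correct and arrives at the same classification, but the route differs from the paper's at one key point. The paper does not solve the Diophantine constraint $ab+bc+cd-abcd=2$ in full generality; instead it invokes Corollary~\ref{cor: forbidden subgraphs} to observe that if $a\ge 2$ and $d\ge 2$ then $G'$ contains an induced $P_2(2,2)$, which is spectrally forbidden, so one may take $a=1$ immediately. It then rules out $d=1$ by a quick coefficient comparison (forcing $n=bc=1$, contradicting $n\ge 3$) and finishes with essentially the same case split on $b$ that you carry out. Your approach trades this structural shortcut for a purely arithmetic elimination of $a\ge 2$, which has the virtue of being self-contained and not appealing back to interlacing, at the cost of an extra inequality step.

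Two small points. First, your ``short bound argument'' is the crux of the difference and should be written out; one clean version is: from $b(a+c)=cd(ab-1)+2$ with $d\ge a\ge 2$ one gets $ab+bc\ge ac(ab-1)+2$, hence $ab-2\ge c\bigl(a^{2}b-a-b\bigr)$, and for $a\ge 2$ the right side is at least $a^{2}b-a-b\ge 3b-2>ab-2$ once $b\ge 1$, a contradiction. Second, in your $b\ge 3$ case you argue that $(b-1)d-b>b-2$ forces $d\le 2$; you should also note that $d=1$ gives $c\cdot(-1)=b-2>0$, which is impossible, so indeed $d=2$.
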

Note that the graphs $A_a$ and $B_a$ given in Section~\ref{sec: pairs} have the forms (i) and (ii) respectively. 
\begin{proof}
 If $a\ge 2$ and $d\ge 2$, then there is an induced $P_2(2,2)$ in the graph. By Corollary~\ref{cor: forbidden subgraphs}, such a graph cannot be cospectral to $P_2(1,n)$. So, without loss of generality, let $a=1$. \\ 
 
    We show that $d >1$ for all $G'$. Assume toward a contradiction that $d =1$. 
    Then by Lemma~\ref{lem: char poly complete minus edge}, $$\phi(G') = x^{b+c+2} - (b+bc+c)x^{b+c} +bcx^{b+c-2}.$$ 
    Equating coefficients with $\phi(P_2(1,n))$, we have the identities
    \begin{align*}
        n &= bc;\\
        b+bc+c &= n+2.
    \end{align*}
    That is, $b+c = 2$. Since $x \in B$ and $v \in C$, both $b$ and $c$ are at least 1. Therefore $b=1$ and $c=1$, so the product $bc=1$. But, since $bc = n$ and $n$ is assumed to be at least 3, this is a contradiction. \\
    
   

    So, assume $d \ge 2$. By Theorem~\ref{double_star_spec} and Lemma~\ref{lem: char poly complete minus edge}, 
    \begin{align*}
        \phi(P_2(1,n)) &= x^{n+3} - (n+2)x^{n+1} +nx^{n-1}\\
        \phi(G') &= x^{b+c+d+1} - (b+bc+cd)x^{b+c+d-1} + bcdx^{b+c+d-3}
    \end{align*}
    
    Then by equating coefficients in the characteristic polynomials, we have the following two identities 
    \begin{align*}
        n&= bcd;\\
        n+2 & = b+bc+cd.
    \end{align*}
    It follows that 
    \begin{align*}
        bcd+2&= b+bc+cd; \\
        b+bc -2 &\ge 2c(b-1);\\
        c(2-b) &\ge 2-b.
    \end{align*}
    \begin{figure}[t]
    \centering
    \begin{tikzpicture}[main/.style= {draw, circle, scale =0.7}, scale = 1]
        \node[main](u) at (-4,0){$u$}; 
        \node[main] (x) at (-2,0){$x$};
        \node[main] (v) at (0,0){$v$};
        \node[main] (y) at (2,0){$y$}; 
        \node[circle] (u1) at (-4.5, -1){}; 
        \node[circle] (u2) at (-4,-2 ){};
        \node[circle] (u3) at (-3.5,-1.5){};
        \node[circle] (u4) at (-4, -3){};
        \node [fit=(u1)(u2)(u3)(u4)(u),draw, rectangle, rounded corners, label = {below: $A$}] {};
        \node[circle] (x1) at (-2.5, -1){};
        \node[circle] (x2) at (-1.5, -1){};
        \node[circle] (x3) at (-2, -1){};
        \node[circle] (x4) at (-2, -1.75){};
        \node[circle] (x5) at (-2, -2.25){};
        \node[circle] (x6) at (-2,-3){};
        \node [fit=(x1)(x2)(x3)(x4)(x5)(x6)(x),draw, rectangle, rounded corners, label = {below: $B$}] {};
        \node[circle] (v1) at (-0.5,-1){};
        \node[circle] (v2) at (0.5, -1){};
        \node[circle] (v3) at (0,-1){};
        \node[circle] (v4) at (0,-2){}; 
        \node[circle] (v5) at (0,-3){};
        \node [fit=(v1)(v2)(v3)(v4)(v5)(v),draw, rectangle, rounded corners, label = {below: $C$}] {};
        \node[circle] (y1) at (1.5, -1){};
        \node[circle] (y2) at (2.5, -3){};
        \node[main, scale = 1.5] (y3) at (2,-1.5){};
        \node[circle] (y4) at (2, -2.5){};
        \node [fit=(y1)(y2)(y3)(y4)(y),draw, rectangle, rounded corners, label = {below: $D$}] {};
        \draw (u) -- (x);
        \draw (x) -- (v);
        \draw (v) -- (y); 
        \draw (u) -- (x3);
        \draw (u) -- (x4);
        \draw (u) -- (x5);
        \draw (u) -- (x6);
        \draw (v) -- (x3);
        \draw (v) -- (x4);
        \draw (v) -- (x5);
        \draw (v) -- (x6);
        \draw (v) -- (y3); 
    \end{tikzpicture}
    \caption{The graph $G'$ such that $c=1$, $d=2$, and $b$ is greater than 2. This corresponds to structure (i) given in Theorem~\ref{thm: structure}.}
    \label{fig: structure c=1}
\end{figure}
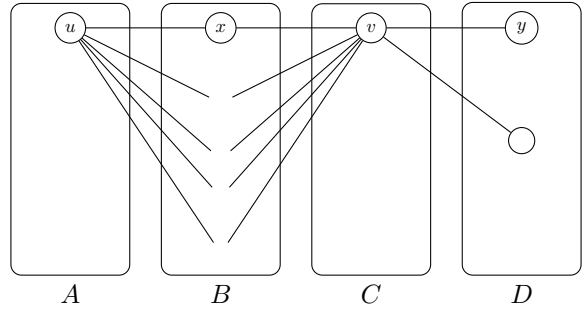

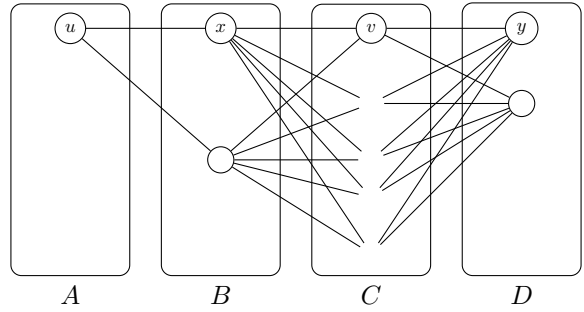
\begin{figure}[t]
    \centering
    \begin{tikzpicture}[main/.style= {draw, circle, scale = 0.7}, scale = 1]
        \node[main](u) at (-4,0){$u$}; 
        \node[main] (x) at (-2,0){$x$};
        \node[main] (v) at (0,0){$v$};
        \node[main] (y) at (2,0){$y$}; 
        \node[circle] (u1) at (-4.5, -1){}; 
        \node[circle] (u2) at (-4,-2 ){};
        \node[circle] (u3) at (-3.5,-1.5){};
        \node[circle] (u4) at (-4, -3){};
        \node [fit=(u1)(u2)(u3)(u4)(u),draw, rectangle, rounded corners, label = {below: $A$}] {};
        \node[circle] (x1) at (-2.5, -1){};
        \node[circle] (x2) at (-1.5, -1){};
        \node[circle] (x3) at (-2, -1){};
        \node[main, scale = 1.5] (x4) at (-2, -1.75){};
        \node[circle] (x5) at (-2, -2.25){};
        \node[circle] (x6) at (-2,-3){};
        \node [fit=(x1)(x2)(x3)(x4)(x5)(x6)(x),draw, rectangle, rounded corners, label = {below: $B$}] {};
        \node[circle] (v1) at (-0.5,-1){};
        \node[circle] (v2) at (0.5, -1){};
        \node[circle] (v3) at (0,-1){};
        \node[circle] (v4) at (0,-1.75){}; 
        \node[circle] (v5) at (0,-2.25){};
        \node[circle] (v6) at (0, -3){};
        \node [fit=(v1)(v2)(v3)(v4)(v5)(v6)(v),draw, rectangle, rounded corners, label = {below: $C$}] {};
        \node[circle] (y1) at (1.5, -1){};
        \node[circle] (y2) at (2.5, -3){};
        \node[main, scale = 1.5] (y3) at (2,-1){};
        \node[circle] (y4) at (2, -1.5){};
        \node[circle] (y5) at (2, -2){};
        \node[circle] (y6) at (2, -2.5){};
        \node[circle] (y7) at (2,-3){};
        \node [fit=(y1)(y2)(y3)(y4)(y),draw, rectangle, rounded corners, label = {below: $D$}] {};
        \draw (u) -- (x);
        \draw (x) -- (v);
        \draw (v) -- (y); 
        \draw (v) -- (y3);
        \draw (u) -- (x4);
        \draw (v) -- (x4);
        \draw (v3) -- (x4); 
        \draw (v4) -- (x4); 
        \draw (v5) -- (x4); 
        \draw (v6) -- (x4); 
         \draw (v3) -- (x); 
        \draw (v4) -- (x); 
        \draw (v5) -- (x); 
        \draw (v6) -- (x); 
        \draw (v3) -- (y3); 
        \draw (v4) -- (y3); 
        \draw (v5) -- (y3); 
        \draw (v6) -- (y3); 
         \draw (v3) -- (y); 
        \draw (v4) -- (y); 
        \draw (v5) -- (y); 
        \draw (v6) -- (y); 
    \end{tikzpicture}
    \caption{The graph $G'$ with $b=2$, $d=2$, and $c$ greater than 1. This graph has structure (ii) defined in Theorem~\ref{thm: structure}. }
    \label{fig: structure b=2}
\end{figure}

    When $b > 2$, this gives the inequality $c \le 1$.  Since $c \ne 0$, assume $c= 1$ as shown in Figure~\ref{fig: structure c=1}. Further, note that since $c=1$, \begin{align*}
        bcd+2 &= b + bc+cd; \\
        d&=2.
    \end{align*}
    Hence $G'$ has form (i) in the theorem statement.\\

    Next we examine the cases where $b=1$ and $b=2$. First, assume $b=1$. Then we have the identity
    \begin{align*}
        cd+2 &= 1 +c+cd;\\
        1&=c.
    \end{align*}
    Such a graph is isomorphic to $P_2(1,d)$. To be cospectral to $P_2(1,n)$, it must be the case that $d=n$, and this graph is isomorphic to our original graph. \\

    Lastly, assume $b=2$. Then 
    \begin{align*}
        2cd +2 &= 2 + 2c +cd; \\
        d&=2.
    \end{align*}
    
    This gives the graph in Figure~\ref{fig: structure b=2}, which has the form (ii) as given in the theorem statement. 
\end{proof}

\begin{theorem}
    If there exists a graph $G$ cospectral to $P_2(1,n)$ but not isomorphic to $P_2(1,n)$, then $n$ is even. 
\end{theorem}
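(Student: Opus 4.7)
The plan is to apply Theorem~\ref{thm: restructure} directly and then read off the parity of $n$ from the identity $n=bcd$ that was already extracted by matching characteristic polynomials in Lemma~\ref{lem: char poly complete minus edge}.

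First I would dispose of small $n$. For $n=1$, the double star $P_2(1,1)=P_4$ is DS (paths are DS, as stated in Section~\ref{sec: intro}), so no nonisomorphic cospectral $G$ exists and the statement is vacuous. For $n=2$, the introduction records that $P_2(1,2)$ is DS, so again the hypothesis cannot be satisfied. Hence I may assume $n\ge 3$ and bring in the full structural theory of Section~\ref{sec: 1,a}.

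For $n\ge 3$, assume $G$ is cospectral to $P_2(1,n)$ but not isomorphic to it, and let $G'$ be the (unique) nontrivial component of $G$, which exists since $G$ is $2K_2$-free by Corollary~\ref{cor: forbidden subgraphs}. By Theorem~\ref{thm: restructure}, $G'$ falls into either form (i), with $c=1$ and $d=2$, or form (ii), with $b=2$ and $d=2$; the intermediate case $b=1$, $c=1$ in the proof of that theorem produces $G'\cong P_2(1,d)$, which upon cospectrality with $P_2(1,n)$ forces $d=n$ and hence $G'\cong P_2(1,n)$, contradicting the assumption. Equating the constant-free coefficient of $\phi(G')$ from Lemma~\ref{lem: char poly complete minus edge} against the corresponding coefficient of $\phi(P_2(1,n))$ gives $n=bcd$. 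Substituting the values from each case yields $n=2b$ in case (i) and $n=4c$ in case (ii); both are even, which is exactly the desired conclusion.

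Essentially all of the heavy structural work has already been done in Theorem~\ref{thm: restructure}, so there is no real obstacle in this final step. Once the two admissible structures for $G'$ are in hand and the identity $n=bcd$ is recorded, the parity of $n$ is an immediate arithmetic consequence; the only care required is to verify the small-$n$ cases separately and to exclude the degenerate $b=c=1$ subcase that would re-produce $P_2(1,n)$ itself.
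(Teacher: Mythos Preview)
Your proposal is correct and follows essentially the same route as the paper: invoke Theorem~\ref{thm: restructure} to pin $G'$ down to form (i) or (ii), then read off $n=2b$ or $n=4c$ from the coefficient comparison. The only differences are cosmetic---you extract the parity via the identity $n=bcd$ (with $a=1$ already fixed in Theorem~\ref{thm: restructure}) rather than re-quoting the full characteristic polynomials from Theorem~\ref{thm: P_2(1,even)}, and you explicitly dispose of $n=1,2$, which the paper's proof tacitly assumes away since Theorem~\ref{thm: structure} requires $n\ge 3$.
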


\begin{proof}
    Assume that there exists a graph $G$ cospectral to $P_2(1,n)$ but not isomorphic to $P_2(1,n)$. By Theorems~\ref{thm: structure} and \ref{thm: restructure}, we know that $G$ has  either form (i) or form (ii).\\

    We know that $P_2(1,n)$ has characteristic polynomial $\phi(P_2(1,n)) = x^{n+3} - (n+2)x^{n+1} +nx^{n-1}$.
    First, assume that $G$ has form (i). From the proof of Theorem~\ref{thm: P_2(1,even)}, such a graph has characteristic polynomial $x^{b+4}-(2b+2)x^{b+2}+2bx^b$. Since the graphs have equal characteristic polynomials, it must be that $n=2b$, and hence $n$ is even. \\

    Assume now that $G$ has form (ii). $G$ has characteristic polynomial $x^{5+c} -(4c+2)x^{3+c}+4cx^{1+c}$, so $n = 4c$, and again $n$ is even. 
\end{proof}

This brings us to our main result. 
\begin{theorem}\label{thm: big boy}
    The double star $P_2(1,n)$ is DS if and only if $n$ is odd or $n=2.$
\end{theorem}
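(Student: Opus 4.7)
The plan is to assemble the main theorem by combining the results already established in Sections~\ref{sec: pairs} and \ref{sec: 1,a}. Because the statement is a biconditional, I would handle each direction separately, leaning on the heavy structural work that has already been done.

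For the \emph{only if} direction, I would prove the contrapositive: if $n$ is even and $n \ne 2$, then writing $n = 2k$ with $k \ge 2$, Theorem~\ref{thm: P_2(1,even)} exhibits the graph $A_k + (k-1)K_1$, which is cospectral to $P_2(1,2k)$ but not isomorphic to it. Hence $P_2(1,n)$ is not DS whenever $n$ is even and $n \ne 2$.

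For the \emph{if} direction, I would split by the size of $n$. When $n$ is odd and $n \ge 3$, the theorem immediately preceding the main result says that any graph cospectral to $P_2(1,n)$ not isomorphic to it must have $n$ even; since we are assuming $n$ is odd, no such nonisomorphic cospectral graph exists, and $P_2(1,n)$ is DS. The small cases $n = 1$ and $n = 2$ fall outside the hypothesis $n \ge 3$ required by Theorems~\ref{thm: structure} and \ref{thm: restructure}, and must be handled separately: for $n = 1$ the graph $P_2(1,1)$ is simply the path $P_4$, which is DS by the standard result that paths are DS; for $n = 2$ the graph $P_2(1,2)$ has only five vertices and four edges, so one can certify DS by a short inspection of the bipartite 5-vertex graphs with four edges and exactly two positive eigenvalues. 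Both of these small cases are also covered by the brute-force enumeration referenced in Section~\ref{sec: intro}.

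The main obstacle is not really the biconditional itself, which amounts to bookkeeping once the construction of Section~\ref{sec: pairs} and the structural theorems of Section~\ref{sec: 1,a} are in hand. The subtle endpoint is $n = 2$, where the structural machinery no longer applies and one must fall back on an enumeration of the handful of 5-vertex candidates to finish the argument; the rest of the proof is a matter of quoting the correct earlier theorem in each case.
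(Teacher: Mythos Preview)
Your proposal is correct and matches the paper's approach: the paper states Theorem~\ref{thm: big boy} without a separate proof, treating it as the evident synthesis of Theorem~\ref{thm: P_2(1,even)} for the non-DS direction and the immediately preceding theorem for the DS direction, with the small cases $n=1,2$ covered by the brute-force remark in Section~\ref{sec: intro}. Your explicit handling of the endpoints $n=1$ (as $P_4$) and $n=2$ (by direct inspection) is exactly the gap-filling the paper leaves to the reader.
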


\section{Conclusion}
In this paper, we have characterized all graphs that share a spectrum with $P_2(1,n)$, concluding that, though almost all trees are not DS, $P_2(1,n)$ is DS for odd values of $n$. A natural extension to this question would be to examine slightly larger double stars to determine conditions on $a$ and $b$ such that $P_2(a,b)$ is DS. 
Increasing $a$ from one while allowing $b$ to vary quickly produces interestingly large sets of nonisomorphic cospectral graphs. For example, $P_2(4,4)$, $P_2(4,6)$, and $P_2(1,4a)$ for $a \ge 2$ each share a spectrum with two distinct graphs. 
$P_2(2,9)$, $P_2(3,4)$, $P_2(5,6)$, and $P_2(4,5)$ each have three graphs to which they are cospectral. The double star $P_2(4,7)$ has at least four cospectral mates, and $P_2(3,8)$ has at least five graphs to which it is cospectral. 
We conjecture that $P_2(4,k)$ for $k>2$ has multiple nonisomorphic cospectral mates. \\

As stated in Theorem~\ref{thm:pairs}, we have given a construction for a graph cospectral to $P_2(2,n)$ when $n$ is odd and a construction for a graph cospectral to $P_2(3,m)$ when $m$ is even. Small examples suggest that neither graph $P_2(2,n)$ for even $n$ or $P_2(3,m)$ for $m$ odd will be DS. We conjecture that there will be common substructures that must appear in graphs cospectral the these double stars, similar to the substructures detailed in Section~\ref{sec: 1,a} for $P_2(1,n)$. Most generally, we conjecture that graphs cospectral to double stars will (eventually) contain $K_{2,k}$ as a subgraph for some conditions on $k$.  \\

It is tempting to conjecture that $P_2(1,n)$ for odd values of $n$ are the only DS double stars, but we recall that $P_2(5,5)$ is DS. 
We propose that determining if $P_2(5,5)$ is the smallest graph in a generalizable pattern of DS double stars is an interesting question for further research. 

\printbibliography

\end{document}